
\documentclass[12pt,a4paper]{article}
%%%%%%%%%%%%%%%%%%%%%%%%%%%%%%%%%%%%%%%%%%%%%%%%%%%%%%%%%%%%%%%%%%%%%%%%%%%%%%%%%%%%%%%%%%%%%%%%%%%%%%%%%%%%%%%%%%%%%%%%%%%%%%%%%%%%%%%%%%%%%%%%%%%%%%%%%%%%%%%%%%%%%%%%%%%%%%%%%%%%%%%%%%%%%%%%%%%%%%%%%%%%%%%%%%%%%%%%%%%%%%%%%%%%%%%%%%%%%%%%%%%%%%%%%%%%
\usepackage[english]{babel}
\usepackage{fancyhdr}
\usepackage[latin1]{inputenc}
\usepackage{amsmath}
\usepackage{amsfonts}
\usepackage{amssymb}
\usepackage{graphicx}
\usepackage{latexsym}
\usepackage{amsthm}
\usepackage{color}
\usepackage{midpage}
\usepackage[width=15.00cm]{geometry}
\usepackage{amscd}
\usepackage{tikz-cd}
\usepackage{hyperref}

\setcounter{MaxMatrixCols}{10}
%TCIDATA{OutputFilter=LATEX.DLL}
%TCIDATA{Version=5.50.0.2960}
%TCIDATA{<META NAME="SaveForMode" CONTENT="1">}
%TCIDATA{BibliographyScheme=Manual}
%TCIDATA{LastRevised=Monday, January 30, 2023 11:49:21}
%TCIDATA{<META NAME="GraphicsSave" CONTENT="32">}
%TCIDATA{Language=American English}

\numberwithin{equation}{section}
\theoremstyle{plain}
\newtheorem{theorem}{Theorem}[section]
\theoremstyle{corollary}
\newtheorem{corollary}[theorem]{Corollary}
\theoremstyle{lemma}
\newtheorem{lemma}[theorem]{Lemma}
\theoremstyle{proprosition}

\theoremstyle{assumption}

\theoremstyle{condition}

\theoremstyle{definition}
\newtheorem{definition}[theorem]{Definition}
\theoremstyle{example}

\newtheorem{remark}[theorem]{Remark}

\def\authors#1{{ \begin{center} #1 \vspace{0pt} \end{center} } \smallskip}
\def\institution#1{{\sl \begin{center} #1 \vspace{0pt} \end{center} } }
\def\inst#1{\unskip $^{#1}$}
\def\title#1{{\huge\bf  \begin{center} #1 \vspace{0pt} \end{center}  } \smallskip}

\begin{document}
	
	\title{\sc Spherical Poisson Waves}
	\authors{\large Solesne Bourguin\inst{*}, Claudio Durastanti\inst{\dagger}, \\Domenico Marinucci\inst{\ddagger} and Anna Paola Todino\inst{§}}
	\institution{\inst{*}Department of Mathematics and Statistics, Boston University\\
		\inst{\dagger}Dipartimento SBAI, Sapienza Universit\`a di Roma\\
		\inst{\ddagger}Dipartimento di Matematica, Universit\`a di Roma ``Tor Vergata" \\
	\inst{§} Dipartimento di Scienze Statistiche, Sapienza Universit\`a di Roma}

%\title{{\Huge Spherical Poisson Waves}}
%\author{Solesne Bourguin\\
%	%EndAName
%	\textit{Department of Mathematics and Statistics, Boston University}\\
%	{\small bourguin@math.bu.edu} \and
%Claudio Durastanti \\
%%EndAName
%\textit{Dipartimento SBAI, La Sapienza Universit\`a di Roma}\\
%{\small claudio.durastanti@uniroma1.it} \and Domenico Marinucci \\
%%EndAName
%\textit{Dipartimento di Matematica, Universit\`a di Roma Tor Vergata}\\
%{\small marinucc@mat.uniroma2.it} \and Anna Paola Todino \\
%%EndAName
%\textit{Dipartimento di Matematica, Politecnico di Torino}\\
%{\small anna.todino@polito.it }}

%\date{}
%\maketitle

\begin{abstract}
We introduce a model of Poisson random waves in $\mathbb{S}^{2}$ and we
study Quantitative Central Limit Theorems when both the rate of the Poisson
process and the energy (i.e., frequency) of the waves (eigenfunctions)
diverge to infinity. We consider finite-dimensional distributions, harmonic
coefficients and convergence in law in functional spaces, and we investigate
carefully the interplay between the rate of divergence of eigenvalues and
Poisson governing measures.
\end{abstract}

\begin{itemize}
\item \textbf{Keywords and Phrases: }Random Spherical Eigenfunctions,
Poisson Random Fields, Quantitative Central Limit Theorems.

\item \textbf{AMS Classification: 60G60; 60F05, 60B10}.
\end{itemize}

\section{Introduction}
%uniUPO
\subsection{Motivations}

The analysis of Gaussian eigenfunctions on different manifolds has recently
become a very attractive area of research - it started in the mathematics
literature mainly about a decade ago (\cite{nazarov,Wig}) and it has then
covered a number of different questions and circumstances, including the
Euclidean case (Berry's Random Wave Model, see \cite{Berry 1977,Berry
	2002,DalmaoEstradeLeon,npr,Vidotto21}), Random Spherical Harmonics
(eigenfunctions on the sphere, see \cite{CMW, CM2020,CM2018,CW, MRW, Todino20}),
Arithmetic Random Waves (eigenfunctions on the torus, see \cite%
{Cam19,CKW,KKW,Maff,MPRW2015,RW08}) and other manifolds (see \cite%
{CH,Duerickx,Sarnak}). The leading motivation for such a strong interest
comes mainly from the physical sciences, and in particular from an \emph{%
	ansatz} by Michael Berry in a 1977 paper \cite{Berry 1977}, where he claimed
that Gaussian random waves could be taken as a universal model to
approximate the behaviour even of deterministic eigenfunctions in the
high-energy limit (i.e., for diverging eigenvalues) under "generic" boundary
conditions.

A common argument to justify the universality of Gaussian behaviour for
eigenfunctions in the physics literature is the \emph{random phase model}
(see \cite{WigSurvey} and the references therein), which we can describe as
follows. Working on $\mathbb{R}^{2},$ assume we observe the superposition of
$N$ waves at a given frequency $k$, that is%
\begin{equation}
T_{k;N}(x)=\frac{1}{\sqrt{N}}\sum_{j=1}^{N}\exp (ik\left\langle \theta
_{j},x\right\rangle +\phi _{j})\text{ ,}  \label{BRW2}
\end{equation}%
for $x\in \mathbb{R}^{2}$, $k\in \mathbb{R}^{+},$ where $\left\{ \theta
_{j}\right\} _{j=1,...,N}$ are random directions on the unit circle and  $%
\left\{ \phi _{j}\right\} _{j=1,...,N}$ are random phases. By a standard
Central Limit Theorem it is then immediate to show that $T_{k;N}(x)$
converges in distribution to a zero mean Gaussian field $\widetilde{T}_{k}(%
\mathbf{\cdot })$ with covariance function given by%
\begin{equation*}
\mathbb{E}\left[ \widetilde{T}_{k}(x_{1})\widetilde{T}_{k}(x_{2})\right]
=J_{0}(k\left\Vert x_{1}-x_{2}\right\Vert _{2}),
\end{equation*}%
where $J_{0}\left( \cdot \right) $ is the Bessel function of order $0$,
given by
\begin{equation*}
J_{0}(u)=\sum_{m=0}^{\infty }(-1)^{m}\frac{u^{2m}}{2^{2m}(m!)^{2}}\text{ }.
\end{equation*}%
\

For a fixed value of the wavelength parameter, hence, the validity of a
Central Limit Theorem result follows from very standard arguments.

It should be noticed, however, that the literature on random eigenfunctions
has actually been developed under the implicit framework of a double
asymptotic setting. Indeed, on the one hand, a diverging number of random phases
is taken to ensure that the behaviour of random eigenfunctions is Gaussian;
on the other hand, Gaussianity is taken for granted when investigating the
asymptotic behaviour of random eigenfunctions in the high-frequency/high
energy sense (i.e., for diverging eigenvalues). Some natural questions are
hence the following - given that Gaussianity has been established for a
\emph{fixed} eigenvalue $k,$ can we justify the use of this assumption in
the limit as $k\rightarrow \infty ?$ Can we allow at the same time the
eigenvalues to grow together with the number of random phases, and still
have a Central Limit Theorem? Do we need some conditions that relate
of the
divergence for the eigenvalue $k$ to the rate of divergence of the number of
random phases $N?$ How many ``\emph{random phases}'' do we
need, in the language of Berry's celebrated model, in order for the Gaussian
approximation to hold at high frequencies?

In this paper we try to address these questions in the case of random
eigenfuctions defined on the two-dimensional sphere $\mathbb{S}^{2};$ the
choice of the sphere is motivated by the fact that it represents the most
interesting case from the point of view of physical applications and it is
known to exhibit the same covariance structure as the Euclidean case, in the
scaling limit (due to so-called Hilb's asymptotics, see \cite[Equation 8.21.7]{Szego}, and \cite{Wig}). The extension of these results to the planar case
does not seem to pose any conceptual difficulties; it would be more
interesting to explore this setting in the case of Arithmetic Random Waves,
which is known to exhibit some differences with respect to Euclidean and
Spherical circumstances. We leave this extension for further research.

\subsection{The model}

Equation (\ref{BRW2}) shows that waves with random phases can actually be
viewed as a superposition of deterministic eigenfunctions centred on random
locations $\left\{ y_{j}\right\} _{j=1,...,N}$. To achieve an analogous
construction in the spherical case, we need to recall first that the
Laplacian operator in $\mathbb{S}^{2}$ is defined by%
\begin{equation*}
\Delta _{\mathbb{S}^{2}}:=\frac{1}{\sin \theta }\frac{\partial }{\partial
	\theta }\sin \theta \frac{\partial }{\partial \theta }+\frac{1}{\sin
	^{2}\theta }\frac{\partial ^{2}}{\partial \varphi ^{2}}\text{ ;}
\end{equation*}%
in the spherical case, a deterministic eigenfunction centred on $y\in
\mathbb{S}^{2}$ can be constructed by%
\begin{equation*}
e_{\ell ;y}(\cdot):\mathbb{S}^{2}\rightarrow \mathbb{R}\text{ , }e_{\ell
	;y}(\cdot):=\sqrt{\frac{2\ell +1}{4\pi }}P_{\ell }(\left\langle
\cdot,y\right\rangle )\text{ , }
\end{equation*}%
where we have introduced the family of Legendre polynomials%
\begin{equation*}
P_{\ell }(t):=\frac{1}{2^{\ell }\ell !}\frac{d^{\ell }}{dt^{\ell }}%
(t^{2}-1)^{\ell }\text{ , }\ell =0,1,2,\ldots;\text{ }t\in \lbrack 0,1]\text{
	.}
\end{equation*}%
The choice of normalization ensures that $P_{\ell }(1)\equiv 1$ for all $%
\ell $ and moreover%
\begin{equation}  \label{eq:norm1}
\left\Vert e_{\ell ;y}\right\Vert _{L^{2}(\mathbb{S}^{2})}=\int_{\mathbb{S}%
	^{2}}\frac{2\ell +1}{4\pi }P_{\ell }^{2}(\left\langle x,y\right\rangle
)dy=P_{\ell }(\left\langle x,x\right\rangle )=1\text{ ,}
\end{equation}%
in view of the duplication formula, see for instance, \cite[Section 13.1.2]%
{MP}; also, we have that $\left\{ e_{\ell ;y}(\cdot)\right\} $ satisfies the
Helmholtz equation%
\begin{equation*}
\Delta _{\mathbb{S}^{2}}e_{\ell ;y}(x)+\lambda _{\ell }e_{\ell ;y}(x)=0\text{
	, }\ell =0,1,2,...,
\end{equation*}%
where $-\lambda _{\ell }=-\ell (\ell +1)$ is the sequence of eigenvalues of
the spherical Laplacian, see again \cite{MP,Wig}.

We convey the idea of random phases on the sphere by introducing a
superposition of waves centred on Poisson distributed random points on $%
\mathbb{S}^{2}.$ Here is a more formal setting. For a more rigorous
definition of Poisson random measures, the reader is referred to Section \ref%
{sec:generalresults}.

\begin{definition}
	The Poisson spherical random wave model (with rate $\nu _{t})$ is defined by
	\begin{equation*}
	T_{\ell ;t}(x):=\frac{1}{\sqrt{\nu _{t}}}\int_{\mathbb{S}^{2}}\sqrt{\frac{%
			2\ell +1}{4\pi }}P_{\ell }(\left\langle x,\xi \right\rangle )dN_{t}(\xi ),
	\end{equation*}%
	where $\left\{ N_{t}(\cdot )\right\} $ is a Poisson process on the sphere
	with governing intensity measure
	\begin{equation*}
	\mathbb{E}\left[ N_{t}(A)\right] =\nu _{t}\times \mu (A)\text{ for all }A\in
	\mathcal{B(}\mathbb{S}^{2})\text{ ,}
	\end{equation*}
	where $\mu$ is the Lebesgue measure on $\mathbb{S}^2$, see for further
	details Section \ref{sec:generalresults} below.
\end{definition}

Our model implies that for all measurable sets $A\subset \mathbb{S}^{2}$ and
$t\geq 0,$ $N_{t}(A)$ is a Poisson random variable with expected value equal
to $\nu _{t}\times \mu (A),$ and for $A_{1}\cap A_{2}=\varnothing $, $N_{t}(A_{1})$ and $N_{t}(A_{2})$ are independent. We can also write the
Poisson spherical wave as
\begin{equation*}
T_{\ell ;t}(x)=\frac{1}{\sqrt{\nu _{t}}}\sum_{k=1}^{N_{t}(\mathbb{S}^{2})}%
\sqrt{\frac{2\ell +1}{4\pi }}P_{\ell }(\left\langle x,\xi _{k}\right\rangle )%
\text{ ,}
\end{equation*}%
so that we can view spherical Poisson random waves as occuring from the sum
of a (random) number of deterministic waves, centred at points which are
uniformly distributed on the sphere.

It is now convenient to introduce the standard basis for the $(2\ell +1)$%
-dimensional space of eigenfunctions corresponding to the eigenvalue $%
\lambda _{\ell };$ the elements of the basis are sometimes called \emph{%
	fully normalized spherical harmonics, }and are defined as the normalized
eigenfunctions $\left\{ Y_{\ell m}\right\} _{m=-\ell ,...,\ell }$ which
satisfy the further condition (in spherical coordinates)
\begin{equation*}
Y_{\ell m}:\mathbb{S}^{2}\rightarrow \mathbb{R}\text{ , }\frac{\partial ^{2}%
}{\partial \varphi ^{2}}Y_{\ell m}(\theta ,\varphi )=-m^{2}Y_{\ell m}(\theta
,\varphi )\text{ .}
\end{equation*}%
The elements of the real fully normalized spherical harmonics basis can be
written explicitly as the normalized product of the so-called Legendre
associated function $P_{\ell}^{m}: \left[-1, 1\right] \mapsto \mathbb{R}$ of
degree $\ell$ and order $m$, which depends only on $\theta$ and is defined
by
\begin{equation*}
P_{\ell}^{ m}(t):=(1-t^{2})^{m/2}\frac{d^{m}}{dt^{m}}P_{\ell }(t)\text{ , }%
t\in \lbrack 0,1]\text{ }
\end{equation*}%
(see \cite[Equation 13.7]{MP}), and a trigonometric function depending only
on $\phi$, that is,
\begin{equation*}
Y_{\ell m}\left(\theta, \phi\right) =
\begin{cases}
\sqrt{\frac{2\ell+1}{2\pi}\frac{\left(\ell-m\right)!}{\left(\ell+m\right)!}}
P_{\ell}^{ m} \left( \cos \theta \right) \cos \left( m \phi \right) & \text{%
	for } m \in \left\{1, \ldots, \ell\right\} \\
\sqrt{\frac{2\ell+1}{4\pi}} P_{\ell}\left(\cos \theta\right) & \text{for } m
=0 \\
\sqrt{\frac{2\ell+1}{2\pi}\frac{\left(\ell+m\right)!}{\left(\ell-m\right)!}}
P_{\ell}^{ -m}\left(\cos \theta\right) \sin \left( -m \phi \right) & \text{%
	for } m \in \left\{-\ell, \ldots, -1\right\}%
\end{cases}%
,
\end{equation*}
see, for example, \cite[Remark 3.37]{MP}.

It should be noted, however, that none of the results below depend on the
specific choice of our basis; they would hold unaltered for any orthonormal
system. The most important properties of the fully normalized spherical
harmonics are the addition and duplication formula (see respectively \cite[%
Eq. (3.42) and Sec. 13.1.2]{MP}), which are given respectively by%
\begin{equation}
\sum_{m=-\ell }^{\ell }Y_{\ell m}(x){Y}_{\ell m}(y)=\frac{2\ell +1}{4\pi }%
P_{\ell }(\left\langle x,y\right\rangle )\text{ ,}  \label{addition}
\end{equation}%
\begin{equation}
\int_{\mathbb{S}^{2}}\frac{2\ell +1}{4\pi }P_{\ell }(\left\langle
x,z\right\rangle )\frac{2\ell +1}{4\pi }P_{\ell }(\left\langle
z,y\right\rangle )dz=\frac{2\ell +1}{4\pi }P_{\ell }(\left\langle
x,y\right\rangle )\text{ ,}  \label{duplication}
\end{equation}%
for all $x,y\in \mathbb{S}^{2}.$ Using the addition formula yields
\begin{equation*}
T_{\ell ;t}(x)=\frac{1}{\sqrt{\nu _{t}}}\sqrt{\frac{4\pi }{2\ell +1}}%
\sum_{k=1}^{N_{t}(\mathbb{S}^{2})}\sum_{m=-\ell }^{\ell }Y_{\ell
	m}(x)Y_{\ell m}(\xi _{k})=\sum_{m=-\ell }^{\ell }\hat{a}_{\ell, m}(t)Y_{\ell
	m}(x),
\end{equation*}%
where the random spherical harmonic coefficients $\left\{ \hat{a}_{\ell
	,	m}\right\} _{m=-\ell ,...,\ell }$ are defined by%
\begin{equation*}
\hat{a}_{\ell ,m}(t):=\sqrt{\frac{4\pi }{(2\ell +1)\nu _{t}}}\sum_{k=1}^{N_{t}}{Y%
}_{\ell m}(\xi _{k})\text{ },
\end{equation*}%
where $\{\xi_k\}$ are the points charged by the Poisson process. Note that%
\begin{equation*}
\mathbb{E}[\hat{a}_{\ell ,m}(t)\hat{a}_{\ell ^{\prime },m^{\prime }}(t)]=\delta
_{m}^{m^{\prime }}\delta _{\ell }^{\ell ^{\prime }}\frac{4\pi }{(2\ell +1)}
\end{equation*}%
and
\begin{equation*}
\mathbb{E}[T_{\ell ;t}(x)T_{\ell ;t}(y)]=P_{\ell }(\langle x,y\rangle )\text{
	.}
\end{equation*}%
It is also easy to verify that the Parseval's identity holds, i.e.
\begin{equation*}
||T_{\ell ;t}||_{L^{2}(\mathbb{S}^{2})}^{2}=\int_{\mathbb{S}^{2}}T_{\ell
	;t}^{2}(x)dx=\sum_{m=-\ell }^{\ell }|\hat{a}_{\ell, m}(t)|^{2}.
\end{equation*}

\subsection{Overview of the main results}

In this work, we consider the convergence in law of the Poisson random
spherical eigenfunctions to a Gaussian limit when both the rate of the
Poisson process and the eigenvalue sequence $\lambda _{\ell }$ diverge to
infinity; see for instance \cite{Eichelsbacher,LSY,Last} and the references
therein for some recent results on quantitative convergence bounds in a
Poisson framework. We focus on three different cases:

a) We study the convergence of the finite-dimensional distributions for a
fixed array of $d$-points $(x_{1},x_{2},...,x_{d})\in \mathbb{S}^{2},$ with
special emphasis on the univariate marginal distribution for $d=1;$ here we
prove that a quantitative Central Limit Theorem holds insofar we have that $%
d^{2}\sqrt{\log \ell }=o(\sqrt{\nu _{t}})$. In particular, for the special
case $d=1$ asymptotic Gaussianity holds for eigenvalues that increase
polynomially fast with respect to the rate of occurrence of Poisson events.

b) We also study the convergence in law for the vector of spherical harmonic
coefficients $\left\{ \hat{a}_{\ell, \cdot }\right\} _{m=-\ell ,...,\ell };$
again a multivariate Central Limit Theorem would be straightforward, but
here we provide a quantitative version when $\ell $ (and hence the dimension
of the vector itself) grows with $\nu _{t}.$ The bound we obtain here is of
order $\sqrt{\frac{\log \ell }{\nu _{t}}},$ thus entailing that multivariate
asymptotic Gaussianity holds provided $\sqrt{\log \ell }=o(\sqrt{\nu _{t}})$%
. Out of this bound, it is also possible to derive an alternative rate of
convergence for finite-dimensional distributions of order $d,$ which turns
out to be $d\sqrt{\ell \log \ell /\nu _{t}}$, see Remark \ref{clementi}. For fixed $d$, this is clearly worse than the bound we discussed in the previous point, but it can actually be better if one envisages $d$ as growing with $\ell$ at a suitably fast rate.

c) We then consider functional convergence results, where we view the
eigenfunctions $\left\{ T_{\ell ;t}\right\} $ as random elements $T_{\ell
	;t}:\Omega \rightarrow L^{2}(\mathbb{S}^{2})$, i.e. as measurable
applications with the topology induced on $L^{2}(\mathbb{S}^{2})$ by the
standard metric
\begin{equation*}
d^{2}(f,g):=\left\Vert f-g\right\Vert _{L^{2}(\mathbb{S}^{2})}^{2}=\int_{%
	\mathbb{S}^{2}}|f(x)-g(x)|^{2}dx\text{ .}
\end{equation*}%
Exploiting some very recent and important results by \cite{BCD} (see also
\cite{BC}), we are able here to show that a quantitative Central Limit
Theorem holds under the simple condition that $\nu _{t}\rightarrow \infty .$
This is apparently surprising, because in this functional case it turns out
that asymptotic Gaussianity will hold no matter how fast the sequence of
eigenvalues diverge to infinity, on the contrary of what we have stated for
the (apparently simpler) cases under b) and c). A careful inspection of the
results reveals that the apparent paradox is due to the topological
structure induced by the $\left\Vert .\right\Vert _{L^{2}(\mathbb{S}^{2})},$
which is much coarser than the one given for instance, by the sup
norm. In particular, weak convergence with respect to $\left\Vert
.\right\Vert _{L^{2}(\mathbb{S}^{2})}$ does not entail convergence of the
finite-dimensional distributions, not even univariate ones.

d) Finally, we establish a quantitative Central Limit Theorem in functional spaces which induce finer topologies; we focus in particular on Sobolev spaces (see Theorem \ref{Sobolev}).
Here, we are able to obtain the rate $\frac{\sqrt{\pi} \left(1+\sqrt{\ell(\ell+1)}\right)^{2\alpha}}{2\sqrt{\nu
		_{t}}} + \frac{2\pi \left(1+\sqrt{\ell(\ell+1)}\right)^{3\alpha}}{\sqrt{\nu _{t}}}$, which is much worse than in both the $L^2$ case and for marginal distributions. However, functional convergence in Sobolev spaces with sufficient regularity is, of course, a much stronger result; in particular, among others, it does imply convergence of the finite-dimensional distributions at fixed locations on the sphere, as detailed below in Section \ref{2.3.2} , Corollary \ref{final}.
\begin{remark}
	It is well-known (see \cite[Corollary 1]{BTA}) that convergence in $%
	L^{2}(\cdot )$ does indeed entail pointwise convergence in the case of
	Reproducing Kernel Hilbert Spaces (RKHS). Because the space of spherical
	eigenfunctions is indeed a RKHS, again the point in c) may sound
	counterintuitive. There is a subtle point here, as $\ell $ increases, we are
	actually dealing with a sequence of RKHS; whereas it is indeed possible to
	bound the pointwise norm with the $L^{2}$ distance \textsl{up to a constant,}
	the \textquotedblleft constant\textquotedblright\ does vary with $\ell ,$
	and indeed it diverges to infinity as we shall discuss below; so no
	contradiction arises.
\end{remark}

\subsection{Some remarks on the nature of asymptotics}

At this stage, it is important to add some remarks on the nature of our asymptotic results. We note first that \emph{both the multipole index $\ell $ and the
	Poisson rate diverge jointly to infinity in our framework}; for fixed multipoles $\ell$, convergence to Gaussianity remains true but becomes rather trivial and uninnteresting. As a consequence of this double asymptotics framework, the covariance functions of the processes that we study do not converge to the covariance of a well-defined, measurable function on the sphere. 

Indeed it is easy to see that for any choice of fixed points on the sphere the covariance of our process converges to zero as $\ell \rightarrow \infty .$ This implies that the
limiting process (if it existed) would not be mean square continuous; but such a process cannot be well-defined (i.e., measurable) as proved in \cite{MP13}.

However, this apparent difficulty allows us to exploit the full power of quantitative central limit results. Indeed, this class of theorems does	not require, by any means, that the sequences converge to a well defined
limiting distribution. One can have two sequences of random variables $%
X_{n},Y_{n}$ and show that $d_{W}(X_{n},Y_{n})\rightarrow 0$ as $%
n\rightarrow \infty ,$ meaning that we can approximate the distribution of $%
X_{n}$ arbitrarily well with the distribution of $Y_{n},$ for $n$ large
enough, independently from the fact that $X_{n}$ converges or not to a limit distribution. For instance,$Y_{n}$ could be a sequence of
Gaussian variables with oscillating mean and variance $\mu _{Y_{n}}$ and $%
\sigma _{Y_{n}}^{2},$ and still one could use the Gaussian quantiles to
approximate the distribution of $X_{n}$ as $N(\mu _{Y_{n}},\sigma
_{Y_{n}}^{2}).$ Very much the same can be said below, where  the covariance
operators do not converge to meaningful limits. For ideas that are in a
broad sense related, one could also think about the large $p,$ large $n$
framework in random matrix theory (see for instance \cite{BD,BDT} where it
is shown that under appropriate conditions, the laws of large Wishart
random matrices become indistinguishable from the laws of the Gaussian orthogonal ensemble).

For completeness, we add that it may be possible to get
some form of nondegenerate limiting behaviour for random waves: in particular, if neglecting the spherical structure and focussing only on shrinking domains
around a single fixed point $x\in \mathbb{S}^2$, then it could be possible to show that the
scaling limit of the waves $T_{\ell }$ when projected on the tangent plane
converges locally to random eigenfunctions on $\mathbb{R}^{2}$ (Berry's
random waves). The price to pay for this approach would however be high: the result would no longer deal with convergence on the sphere, which is what we are studying in this work. Moreover, this approach would not allow to answer the question that we addressed here and that we consider interesting for physical applications:  given a random spherical harmonic $%
T_{\ell }$ with $\ell $ suitably large, what is the order of magnitude of the governing Poisson rate that is required  for the Gaussian approximation to be adequate? This is exactly the issue that we address in the sequel, under a variety of different circumstances.

\subsection{Acknowledgements}

The research by SB was supported in part by the Simons Foundation grant 635136. The research by CD was supported by Sapienza Grant  RM12117A6212F538. The
research by DM was partially supported by the MIUR Departments of Excellence
Programs Math@ToV and MatModTov.

%\subsection{Acknowledgements}

%The research by CD was supported by Sapienza Grant RM12117A6212F538. The
%research by DM was partially supported by the MIUR Departments of Excellence
%Program Math@ToV, CUP E83C18000100006. The research by APT was partially
%supported by Progetto di Eccellenza, Dipartimento di Scienze Matematiche,
%Politecnico di Torino, CUP: E11G18000350001.

\section{Main Results}

Before we proceed with the statement of our results, we need to recall
briefly the probability metrics that we are going to exploit, which are
defined by

%a) Kolmogorov metric: for any two random variables $X,Y:\Omega \rightarrow
%\mathbb{R}$%
%\begin{equation*}
%d_{Kol}(X,Y)=\sup_{x\in \mathbb{R}}|\mathbb{E}\left[ \mathbb{I}_{(-\infty
%	,x]}(X)\right] -\mathbb{E}\left[ \mathbb{I}_{(-\infty ,x]}(Y)\right]
%|=\sup_{x\in \mathbb{R}}|F_{X}(x)-F_{Y}(x)|\text{ ,}
%\end{equation*}%
%$F(.)$ denoting as usual the distribution function.

a) Wasserstein metric: for any two random vectors $X,Y:\Omega \rightarrow
\mathbb{R}^{d}$%
\begin{equation*}
d_{W}(X,Y)=\sup_{h\in Lip(1)}|\mathbb{E}\left[ h(X)\right] -\mathbb{E}\left[
h(Y)\right] |\text{ , }
\end{equation*}%
\begin{equation*}
h\in Lip(1)\Leftrightarrow h:\mathbb{R}^{d}\mathcal{\rightarrow }\mathbb{R}%
:M_{1}(h)\leq 1\text{ .}
\end{equation*}%
where $M_{1}(h)$ is defined by
\begin{equation}
M_{1}(h):=\sup_{\substack{ x,y\in \mathbb{R}^{d},  \\ x\neq y}}\frac{%
	|h(x)-h(y)|}{||x-y||_{ \mathbb{R}^{d}}}.  \label{eq:MLC}
\end{equation}%
b) $d_{3}$ metric: for any two random vectors $X,Y:\Omega \rightarrow
\mathbb{R}^{d}$ such that $\mathbb{E}||X||_{\mathbb{R}^{d}}^{2}$, $\mathbb{E}%
||Y||_{\mathbb{R}^{d}}^{2}<\infty $,
\begin{equation*}
d_{3}(X,Y)=\sup_{h\in \mathcal{I}}|\mathbb{E}\left[ h(X)\right] -\mathbb{E}%
\left[ h(Y)\right] |
\end{equation*}%
where $\mathcal{I}$ indicates the collection of all functions $h\in \mathbb{C%
}^{3}(\mathbb{R}^{d})$ such that $||h^{\prime \prime }||_{\infty }\leq 1$
and $||h^{\prime \prime \prime }||_{\infty }\leq 1$.

c) Functional $d_3$ metric: for a general function space $K$ we have that
$C_b^3(K)$ is the class of real-valued functions on $K$ that have bounded Fr\'echet derivatives up to order three. This space is equipped with the norm 
\[ ||h||_{C_b^3(K)}= \sup_{j=1,2,3} \sup_{x \in K} ||D^j h(x)||_{K^{\otimes j}} .\]
Then, given a Hilbert space $K$ and
any two random elements $X,Y:\Omega \rightarrow K$%
\begin{eqnarray*}
	d_{3}(X,Y) &=&\sup_{h\in C_b^3(K)}|\mathbb{E}\left[ h(X)\right] -\mathbb{E}%
	\left[ h(Y)\right] |\text{ . }
\end{eqnarray*}
Clearly b) can be viewed as a special case of c), for $\mathcal{H}=
\mathbb{R}^d.$ We refer to \cite[Appendix C]{noupebook} for more
discussion and examples on probability metrics and their mutual
relationships.

We divide our results below in three subsections, referring respectively to
finite-dimensional distributions, harmonic coefficients and functional
convergence.

\subsection{Convergence of the finite dimensional distributions}

We start from a simple univariate case; this is of course implied by the $d$%
-dimensional result that we give below, but we prefer to treat it on its own
for clarity of exposition and to optimize the value of the relevant
constants.

\begin{theorem}
	\label{one-dim} (One-dimensional case) Let the notation above prevail and $%
	Z\sim \mathcal{N}(0,1)$. For all $x\in \mathbb{S}^{2}$ we have that%
	\begin{equation*}
	d_{W}(T_{\ell ;t}(x),Z)\leq \left(\frac{\sqrt{3}}{2\pi^2} +\frac{2}{3}\sqrt{%
		\frac{3}{2\pi^3 }} \right) \sqrt{\frac{\log \ell }{\nu _{t}}}+o\left(\sqrt{%
		\frac{\log \ell }{\nu _{t}}}\right)\text{ }.
	\end{equation*}
\end{theorem}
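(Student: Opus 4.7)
The plan is to rewrite $T_{\ell;t}(x)$ as a first-order Wiener--It\^o integral against the compensated Poisson random measure $\widetilde{N}_t := N_t - \nu_t \mu$ and then to apply a Malliavin--Stein Wasserstein bound for Poisson first chaos. Because $\int_{\mathbb{S}^2}P_\ell(\langle x,\xi\rangle)\,d\mu(\xi)=0$ for every $\ell\geq 1$ (by orthogonality of $P_\ell$ to the constant $P_0\equiv 1$), the kernel is automatically centered, so
\[
T_{\ell;t}(x) = I_1(f_x), \qquad f_x(\xi) := \sqrt{\tfrac{2\ell+1}{4\pi\nu_t}}\,P_\ell(\langle x,\xi\rangle),
\]
and $\|f_x\|_{L^2(\nu_t\mu)}^{2}=1$ as an immediate consequence of (\ref{eq:norm1}); in particular $T_{\ell;t}(x)$ already has unit variance. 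On a first chaos both $D_\xi I_1(f_x)=f_x(\xi)$ and $-DL^{-1}I_1(f_x)=f_x$ are deterministic, so the variance-type term in the Peccati--Sol\'e--Taqqu--Utzet / Last--Peccati--Schulte Wasserstein bound vanishes, leaving the moment term
\[
d_W(T_{\ell;t}(x),Z) \leq \int_{\mathbb{S}^2} |f_x|^{3}\,\nu_t\,d\mu = \|f_x\|_{L^3(\nu_t\mu)}^{3}.
\]

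The logarithmic factor is then extracted by bounding the $L^3$ norm against the $L^4$ norm through Cauchy--Schwarz,
\[
\|f_x\|_{L^3(\nu_t\mu)}^{3} = \int |f_x|^{2}\cdot |f_x|\,\nu_t\,d\mu \leq \|f_x\|_{L^2(\nu_t\mu)}\,\|f_x\|_{L^4(\nu_t\mu)}^{2} = \|f_x\|_{L^4(\nu_t\mu)}^{2},
\]
which reduces the whole estimate to the explicit evaluation of
\[
\|f_x\|_{L^4(\nu_t\mu)}^{4} = \frac{(2\ell+1)^{2}}{(4\pi)^{2}\,\nu_t}\int_{\mathbb{S}^2}P_\ell^{4}(\langle x,\xi\rangle)\,d\mu(\xi).
\]

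The analytically delicate step --- and the main obstacle --- is the sharp high-energy asymptotic
\[
\int_{\mathbb{S}^2}P_\ell^{4}(\langle x,\xi\rangle)\,d\mu(\xi) = \frac{6\,\log\ell}{\pi\,\ell^{2}}\bigl(1+o(1)\bigr).
\]
I would establish it through Hilb's asymptotic formula $P_\ell(\cos\theta)=\sqrt{2/(\pi\ell\sin\theta)}\,\cos\bigl((\ell+\tfrac12)\theta-\tfrac{\pi}{4}\bigr)+O((\ell\sin\theta)^{-3/2})$, valid on $\theta\in[c/\ell,\pi-c/\ell]$, together with the trivial $|P_\ell|\leq 1$ near the poles. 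Expanding $\cos^{4}(u)=\tfrac{3}{8}+\tfrac{1}{2}\cos(2u)+\tfrac{1}{8}\cos(4u)$, the two oscillatory pieces integrate to lower order by integration by parts, while the non-oscillatory $\tfrac{3}{8}$ term produces an integrand proportional to $(\ell\sin\theta)^{-2}$; integrating $(\sin\theta)^{-1}d\theta$ between $c/\ell$ and $\pi-c/\ell$ delivers the factor $2\log\ell$, and the pole regions $\theta\lesssim 1/\ell$ contribute only $O(\ell^{-2})$ and are absorbed in the $o(\cdot)$ remainder. Substituting back gives $\|f_x\|_{L^4(\nu_t\mu)}^{2}\sim \sqrt{3/(2\pi^{3})}\,\sqrt{\log\ell/\nu_t}$, which already matches the second constant in the statement; the two-constant structure of the final bound should then emerge from bookkeeping the Stein prefactor $\sqrt{2/\pi}$ attached to the ``variance-type'' portion of the Malliavin--Stein decomposition and the factor $4/3$ arising on the residual $L^{3}$ moment piece, each controlled through the same Legendre $L^{4}$ asymptotic.
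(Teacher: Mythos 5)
Your argument is correct, but it takes a genuinely different route from the paper's. The paper stays inside the fourth-moment framework: it applies the Poisson fourth moment theorem (Theorem \ref{FMT-DP}, from D\"obler--Peccati and D\"obler--Vidotto--Zheng), so all it needs is $\mathbb{E}[T_{\ell;t}^4]-3\sim \frac{3}{2\pi^3}\frac{\log\ell}{\nu_t}$, obtained by citing the Legendre asymptotic $\int_0^1 P_\ell^4(t)\,dt\sim\frac{3}{2\pi^2}\frac{\log\ell}{\ell^2}$ of Marinucci--Wigman; the two-term constant in the statement is exactly $\left(\frac{1}{\sqrt{2\pi}}+\frac{2}{3}\right)\sqrt{\frac{3}{2\pi^3}}$ coming from the prefactor $c_1$ of that theorem. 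You instead invoke the first-chaos Malliavin--Stein bound $d_W(I_1(f),Z)\le |1-\|f\|_{L^2}^2|+\|f\|_{L^3}^3$ (Peccati--Sol\'e--Taqqu--Utzet, Last--Peccati--Schulte), correctly observe that the kernel is centred and exactly normalized so the first term vanishes, and then pass to $\|f_x\|_{L^4}^2$ by Cauchy--Schwarz. Since for a first-chaos Poisson integral $\mathrm{cum}_4(I_1(f))=\|f\|_{L^4}^4$, you end up bounding the very same quantity as the paper, but with constant $1$ in place of $c_1=\frac{1}{\sqrt{2\pi}}+\frac{2}{3}>1$: your bound $\sqrt{\frac{3}{2\pi^3}}\sqrt{\frac{\log\ell}{\nu_t}}+o(\cdot)$ is slightly sharper and implies the stated inequality a fortiori. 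Your plan to re-derive the $L^4$ Legendre asymptotic from Hilb's formula (the $\frac38$ term of $\cos^4$ producing the $\log\ell$, oscillatory terms and polar caps being lower order) is sound, though the paper simply cites the known lemma. The one flawed piece is your closing paragraph: in your own decomposition the variance-type term is identically zero, so there is no ``$\sqrt{2/\pi}$ Stein prefactor'' or ``$4/3$ factor'' to book-keep, and you should not try to reconstruct the paper's two-constant expression --- it is an artifact of the fourth-moment theorem's constant $c_1$, and your single, smaller constant already suffices to conclude.
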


\begin{proof} Note first that, because $T_{\ell ;t}\left( x\right) =\frac{1%
	}{\sqrt{\nu _{t}}}\int_{\mathbb{S}^{2}}\sqrt{\frac{2\ell +1}{4\pi }}P_{\ell
	}\left( \left\langle x,\xi \right\rangle \right) dN_{t}(\xi)$ we are in the
	domain of validity of Fourth Moment Theorems for integral functionals of
	Poisson processes, see for instance \cite{PZ} and many subsequent papers. In
	particular, we shall exploit \cite[Theorem 1.7]{DVZ}, which we recall in
	Section \ref{sec:generalresults} below, see Theorem \ref{FMT-DP}. To apply
	this result, we need to compute the fourth moment of $T_{\ell ; t},$ which is
	given by
	
	\begin{equation*}
	\mathbb{E}[T_{\ell;t }^{4}(x)]=\sum_{m_{1}m_{2}m_{3}m_{4}}\mathbb{E}[\hat{a}%
	_{\ell ,m_{1}}(t)\ \hat{a}_{\ell, m_{2}}(t)\hat{a}_{\ell, m_{3}}(t)\ \hat{a}_{\ell,
		m_{4}}(t)]Y_{\ell m_{1}}(x){Y}_{\ell m_{2}}(x)Y_{\ell m_{3}}(x){Y}_{\ell
		m_{4}}(x)\text{ ;}
	\end{equation*}%
	substituting the value of $\hat{a}_{\ell ,m}$ we have that
	
	\begin{equation*}
	\mathbb{E}[T_{\ell ;t}^{4}(x)]=\left( \frac{4\pi }{\nu _{t}(2\ell +1)}\right)
	^{2}\sum_{m_{1},...m_{4}=-\ell }^{\ell }\sum_{k_{1},...,k_{4}=1}^{N_{t}(\mathbb{S}^2)}%
	\mathbb{E}[\hat{Y}_{\ell m_{1}}(\xi _{k_{1}})\hat{Y}_{\ell m_{2}}(\xi
	_{k_{2}})\hat{{Y}}_{\ell m_{3}}(\xi _{k_{3}})\hat{Y}_{\ell m_{4}}(\xi
	_{k_{4}})]
	\end{equation*}
	
	\begin{equation*}
	\times Y_{\ell m_{1}}(x){Y}_{\ell m_{2}}(x)Y_{\ell m_{3}}(x){Y}_{\ell
		m_{4}}(x)\text{ }.
	\end{equation*}%
	Exploiting the addition formula (\ref{addition}) we get
	\begin{eqnarray*}
		\mathbb{E}[T_{\ell;t }^{4}(x)] &=&\left( \frac{2\ell +1}{4\pi }\right) ^{2}\frac{1%
		}{\nu _{t}^{2}}\mathbb{E}\left[\sum_{k_{1},...,k_{4}=1}^{N_{t}(\mathbb{S}^2)}P_{\ell }(\langle
		\xi _{k_{1}},x\rangle )P_{\ell }(\langle \xi _{k_{2}},x\rangle )P_{\ell
		}(\langle \xi _{k_{3}},x\rangle )P_{\ell }(\langle \xi _{k_{4}},x\rangle )\right]
		\\
		&=&\left( \frac{2\ell +1}{4\pi }\right) ^{2}\frac{1}{\nu _{t}^{2}}\left( \nu
		_{t}\mathbb{E}\left[ P_{\ell }(\langle \xi _{k_{1}},x\rangle )^{4}\right]
		+3\nu _{t}^{2}\mathbb{E}\left[ P_{\ell }(\langle \xi _{k_{1}},x\rangle )^{2}%
		\right] ^{2}\right) .
	\end{eqnarray*}%
	In \cite[Lemma 2.3]{M e W 2011}, it has been shown that
	\begin{equation*}
	\int_{0}^{1}P_{\ell }^{4}(t)\,dt\sim \frac{3}{2\pi ^{2}}\frac{\log \ell }{%
		\ell ^{2}}\text{ ,}
	\end{equation*}%
	where for any two positive sequences $\left\{ a_{\ell },b_{\ell }\right\}
	_{\ell =1,2,\ldots }$ we write
	\begin{equation*}
	a_{\ell }\sim b_{\ell }\Leftrightarrow \lim_{\ell \rightarrow \infty }\frac{%
		a_{\ell }}{b_{\ell }}=1\text{ }.
	\end{equation*}%
	Thus we get
	\begin{equation*}
	\mathbb{E}\left[ P_{\ell }(\langle \xi _{k_{1}},x\rangle )^{4}\right] =\int_{%
		\mathbb{S}^{2}}P_{\ell }(\langle z,x\rangle )^{4}dz\sim 4\pi \frac{3}{2\pi
		^{2}}\frac{\log \ell }{\ell ^{2}}\text{ , as }\ell \rightarrow \infty \text{
	}.
	\end{equation*}%
	Moreover, since
	\begin{equation*}
	\int_{0}^{1}P_{\ell }(t)^{2}\,dt=\frac{1}{2\ell +1},
	\end{equation*}%
	we also have that
	\begin{equation*}
	\mathbb{E}[P_{\ell }(\langle \xi _{k_{1}},x\rangle )^{2}]=\int_{\mathbb{S}%
		^{2}}P_{\ell }(\langle z,x\rangle )^{2}dz=\frac{4\pi }{2\ell +1}.
	\end{equation*}%
	It follows that
	\begin{equation*}
	\mathbb{E}[T_{\ell ;t}^{4}(x)]=3+\frac{3}{2\pi ^{3}}\frac{\log \ell }{\nu _{t}}%
	+o\left( \frac{\log \ell }{\nu _{t}}\right) \text{ }.
	\end{equation*}%
	Applying Theorem \ref{FMT-DP}, the thesis of the theorem follows.
	
\end{proof}

\begin{remark}
	(\emph{A Comparison with Needlet/Wavelet Coefficients}) We note here that
	the constraint on the rate of convergence of the eigenvalues with respect to
	the rate in the Poisson governing intensity measure is very weak; Theorem %
	\ref{one-dim} shows that asymptotic Gaussianity will continue to hold even
	if we allow $\lambda _{\ell }$ to grow as any polynomial function of the
	rate $\nu _{t}.$ This is in sharp contrast with what is observed in related
	circumstances for the behaviour of spherical wavelet/needlet coefficients
	(see, for example, \cite{DMP}). To compare those results with the ones
	presented here, we recall that needlet coefficients corresponding to $\xi
	\in \mathbb{S}^{2}$ in the notation of this can be considered as equivalent
	to (after normalization)%
	\begin{eqnarray*}
		\beta _{j}(\xi ) &:=&\sum_{\ell =2^{j-1}}^{2^{j+1}}b\left(\frac{\ell }{2^{j}}%
		\right)T_{\ell ;t}(\xi ) \\
		&=&\frac{1}{\sqrt{\nu _{t}}}\int_{\mathbb{S}^{2}}\psi _{j}(\left\langle
		x,\xi \right\rangle )dN_{t}(\xi )\text{ ,} \\
		\psi _{j}(\left\langle x,\xi \right\rangle ) &:=&\sum_{\ell
			=2^{j-1}}^{2^{j+1}}b\left(\frac{\ell }{2^{j}}\right)\frac{2\ell +1}{4\pi }%
		P_{\ell }(\left\langle x,\xi \right\rangle )\text{ ,}
	\end{eqnarray*}%
	where $\left\{ b(\frac{\ell }{2^{j}})\right\} _{\ell =2^{j-1},...,2^{j+1}}$
	is a sequence of suitably constructed weights (see \cite{NPW, BKMP}),
	normalized here so that the coefficients have unit variance. It can be shown
	that (see \cite{DMP})
	\begin{equation*}
	d_{3}(\beta _{j}(\xi ),Z)=O\left(\sqrt{\frac{2^{2j}}{\nu _{t}}}\right)=O\left(\sqrt{\frac{%
			\ell _{j}^{2}}{\nu _{t}}}\right)\text{ , }\ell _{j}:=2^{j}\text{ ,}
	\end{equation*}%
	so that asymptotic Gaussianity follows only for multipoles which grow
	sub-linearly with respect to $\sqrt{\nu _{t}}.$ Heuristically, the kernel $\left\{
	\psi _{j}(\left\langle \xi ,.\right\rangle )\right\} $ is characterized by a
	very fast decay, as opposed to Legendre polynomials (see \cite{NPW,BKMP});
	its support can be considered to shrink as $\ell _{j}^{-2},$ and hence the
	"effective" Poisson rate behaves as $\ell _{j}^{-2}\times \nu _{t}.$ This is
	very different from what we observe in this paper for Poisson random waves,
	because as we mentioned above the support of Legendre polynomials does not
	shrink in any similar way as $\ell $ grows, which makes asymptotic
	Gaussianity much simpler to achieve.
\end{remark}

In order to focus on the more general finite dimensional distributions case,
we need first to introduce some additional notation. Let us fix $d$ points $%
x_{1},x_{2},\dots ,x_{d}$ on $\mathbb{S}^{2}$ and introduce the random
vector
\begin{equation}
F_d=(T_{\ell ;t}(x_{1}),T_{\ell ;t}(x_{2}),\ldots ,T_{\ell ;t}(x_{d}))\text{ ;}
\label{F}
\end{equation}%
the elements of the covariance matrix of $F_{d}$, which we denote by $\Gamma_{d}:=\Gamma^{(\ell)}_{d}(F_d)$, are easily seen to be given by
\begin{equation*}
\Gamma _{d;ij}:=\mathbb{E}[T_{\ell;t }(x_{i})T_{\ell;t }(x_{j})]=P_{\ell
}(\langle x_{i},x_{j}\rangle )\text{ , }i,j=1,\dots ,d\text{ }.
\end{equation*}%
Note that the elements on the diagonal $\Gamma _{d;ii}$, $i=1,\ldots
,d$, are exactly equal to 1 (cf. Eq. \eqref{eq:norm1}).

Before we state our next result, some words on notation. Similarly to what
was done for the definition of Wasserstein distance, where the supremum was
taken with respect to Lipschitz functions with constant no larger than one,
we might have defined the $d_{3}(\cdot,\cdot)$ distance with respect to a
more definite class of functions, such that the two factors $%
M_{2}(g),M_{3}(g)$ are smaller than one (say). These constants are
explicitly given by
\begin{equation*}
M_{k}(g):=\sup_{x\neq y}\frac{||D^{k-1}g(x)-D^{k-1}g(y)||_{op}}{
	||x-y||_{\mathbb{R} ^{d}}}, \quad k\in \mathbb{N}, \quad g
\in C^{k-1}\left(\mathbb{R} ^{d}\right),
\end{equation*}%
where $D^{k-1}g(x)$ is the $(k-1)$--th derivative of $g$ at any point $x\in%
\mathbb{R}^d$ (see also Section \ref{sec:generalresults}). We also recall
that for a vector $x=(x_1,\dots,x_d)^T \in \mathbb{R}^d$, we denote by $%
||x||_{2}$ its Euclidean norm and for a matrix $A \in \mathbb{R}^{d\times d}$%
, we denote by $||A||_{op}$ the operator norm induced by the Euclidean norm,
i.e.,
\begin{equation*}
||A||_{op}:=\sup\{ ||Ax||_{2} : ||x||_2=1\}.
\end{equation*}

We prefer however the current formulation which is more general and
flexible, although slightly more cumbersome. We write $Z_{d}$ for a Gaussian
vector of dimension $d$ with zero mean and covariance matrix equal to $%
\Gamma _{d}$.

\begin{theorem}
	\label{multifdd} We have that
	\begin{equation*}
	d_{3}(F_d,Z_{d})\leq \sup_{g\in C^{3}}B_{3}(g;d)\sqrt{\frac{3}{2\pi ^{3}}}d%
	\sqrt{\frac{\log \ell }{\nu _{t}}}+o\left( d^{2}\sqrt{\frac{\log \ell }{\nu
			_{t}}}\right)
	\end{equation*}%
	where
	\begin{equation*}
	B_{3}(g;d):=\frac{\sqrt{2d}}{4}M_{2}(g)+\frac{2d}{9}M_{3}(g)\text{ }.
	\end{equation*}
\end{theorem}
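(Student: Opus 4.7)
The plan is to invoke a multivariate analogue of the Fourth Moment Theorem for first-order Poisson chaos, i.e.\ the vector-valued extension of Theorem \ref{FMT-DP} (in the spirit of \cite{DVZ} and the Peccati--Zheng--Tudor framework). Each coordinate of $F$ is a first-order Poisson-Wiener integral,
$$T_{\ell;t}(x_i) = \int_{\mathbb{S}^2} f_i(z)\, d\widehat{N}_t(z), \qquad f_i(z) := \frac{1}{\sqrt{\nu_t}}\sqrt{\frac{2\ell+1}{4\pi}}\, P_\ell(\langle x_i, z\rangle),$$
where $\widehat{N}_t$ is the compensated Poisson measure. The multivariate Stein-Malliavin inequality for such vectors produces a bound of the form
$$|\mathbb{E}[g(F)] - \mathbb{E}[g(Z_d)]| \leq \frac{M_2(g)}{4}\, A_2 + \frac{M_3(g)}{9}\, A_3,$$
where $A_2$ involves contractions of pairs of kernels together with the discrepancy $\mathrm{Cov}(F)-\Gamma_d$, and $A_3$ is a sum of fourth-cumulant-type contributions over quadruples of coordinates. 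A first, crucial remark is that $\mathrm{Cov}(F) = \Gamma_d$ \emph{by construction}, so the covariance-mismatch piece of $A_2$ drops out and only the genuine contraction terms remain.

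The central computation is the joint fourth cumulant, which by the Mecke/Campbell identities for Poisson stochastic integrals reduces to
$$\kappa_4\bigl(T_{\ell;t}(x_{i_1}),\ldots,T_{\ell;t}(x_{i_4})\bigr) = \nu_t \int_{\mathbb{S}^2} \prod_{k=1}^{4} f_{i_k}(z)\, dz = \frac{1}{\nu_t}\left(\frac{2\ell+1}{4\pi}\right)^{\!2} \int_{\mathbb{S}^2} \prod_{k=1}^{4} P_\ell(\langle x_{i_k}, z\rangle)\, dz.$$
Applying iterated Cauchy-Schwarz (equivalently, H\"older's inequality) to the product of four factors, and using the rotational invariance of the uniform measure on $\mathbb{S}^2$, the integral is controlled by $\int_{\mathbb{S}^2} P_\ell^4(\langle x, z\rangle)\, dz \sim 4\pi\cdot \frac{3}{2\pi^2}\cdot \frac{\log\ell}{\ell^2}$, via Lemma 2.3 of \cite{M e W 2011} (already used in the proof of Theorem \ref{one-dim}). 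Combining these, I obtain the index-uniform cumulant estimate
$$\bigl|\kappa_4\bigl(T_{\ell;t}(x_{i_1}),\ldots,T_{\ell;t}(x_{i_4})\bigr)\bigr| \leq \frac{3}{2\pi^3}\cdot\frac{\log\ell}{\nu_t}\,(1+o(1)),$$
whose square root furnishes exactly the scale factor $\sqrt{3/(2\pi^3)}\sqrt{\log\ell/\nu_t}$ in the target bound.

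The final step is the assembly of $A_2$ and $A_3$. The $A_3$ contribution is a sum indexed by quadruples, but after rearrangement via Cauchy-Schwarz it telescopes into a form comparable to $2d^2 \sqrt{|\kappa_4|}$, producing the $\frac{2d}{9}M_3(g)\sqrt{3/(2\pi^3)}\,d\sqrt{\log\ell/\nu_t}$ summand. The analogous reduction on $A_2$, which after absorbing one contraction is effectively indexed by triples, yields the $\frac{\sqrt{2d}}{4}M_2(g)\sqrt{3/(2\pi^3)}\,d\sqrt{\log\ell/\nu_t}$ summand; the lower-order combinatorial terms and the $(1+o(1))$ remainder from the $L^4$ asymptotic are collected into $o(d^2 \sqrt{\log\ell/\nu_t})$. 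The main obstacle is precisely this bookkeeping: identifying the cleanest multivariate Stein inequality so that the combinatorial dependence on $d$ collapses to $\sqrt{2d}/4$ and $2d/9$ rather than a cruder $d^2$ or $d^4$, and verifying that cross-contractions with repeated indices are absorbed into the error term. The cumulant identity and the Cauchy-Schwarz reduction are routine once one accepts the $L^4$ asymptotic from \cite{M e W 2011}; the delicate part is the constant- and dimension-tracking in the Stein step.
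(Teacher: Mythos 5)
There is a genuine gap, and it sits exactly where you yourself flag ``the main obstacle'': the assembly step that is supposed to turn your quadruple-indexed cumulant estimates into the specific constants $\frac{\sqrt{2d}}{4}M_2(g)$ and $\frac{2d}{9}M_3(g)$ is never carried out, only asserted (``telescopes into a form comparable to $2d^2\sqrt{|\kappa_4|}$'', ``effectively indexed by triples''). That step is not routine bookkeeping: it is precisely the content of the multidimensional fourth moment theorem of D\"obler--Vidotto--Zheng (Theorem \ref{FMT-DVZ} in the paper, i.e.\ \cite[Theorem 1.7]{DVZ}), which states that for a vector with all components in the first Poisson chaos one has $|\mathbb{E}[g(F)]-\mathbb{E}[g(Z_d)]|\leq B_3(g;d)\sum_{i=1}^d\sqrt{\mathbb{E}[F_i^4]-3\mathbb{E}[F_i^2]^2}$ with $B_3(g;d)=\frac{\sqrt{2d}}{4}M_2(g)+\frac{2\sqrt{d\,\mathrm{Tr}(\Gamma_d)}}{9}M_3(g)$, i.e.\ a bound involving only the \emph{diagonal} fourth cumulants, with the dimension entering solely through $\sqrt{2d}$, $\mathrm{Tr}(\Gamma_d)$ and the sum over $i$. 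Without citing (or reproving) this result, a generic multivariate Stein--Malliavin bound of Peccati--Zheng type, expressed through contractions or joint cumulants over pairs/quadruples of coordinates, does not collapse to these constants, and your proposal gives no argument that it does; so the stated bound is not actually derived.

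A second, related point: once Theorem \ref{FMT-DVZ} is invoked, the bulk of your central computation becomes unnecessary. The paper's proof needs only the marginal fourth moment $\mathbb{E}[T_{\ell;t}(x_i)^4]=3+\frac{3}{2\pi^3}\frac{\log\ell}{\nu_t}+o\bigl(\frac{\log\ell}{\nu_t}\bigr)$, which is the same for every $x_i$ by rotational invariance and was already obtained in the proof of Theorem \ref{one-dim}, together with the trivial observation $\mathrm{Tr}(\Gamma_d)=d$ (the diagonal entries are $P_\ell(\langle x_i,x_i\rangle)=1$). Your uniform estimate of the off-diagonal joint cumulants via H\"older and the $L^4$ asymptotics of \cite{M e W 2011} is correct as far as it goes, but it feeds into an inequality you have not established; the covariance-mismatch remark is also fine but immaterial, since in Theorem \ref{FMT-DVZ} the Gaussian target $Z_d$ is taken with covariance $\Gamma_d=\mathrm{Cov}(F)$ by definition. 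To repair the proof, replace the unspecified ``cleanest multivariate Stein inequality'' by an explicit appeal to \cite[Theorem 1.7]{DVZ}, drop the off-diagonal cumulant analysis, and conclude as in the paper by summing the $d$ identical diagonal contributions.
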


\begin{proof} First of all we note that all the components of $F_d$ belong
	to the same first-order Poisson Wiener chaos and then we can apply Theorem %
	\ref{FMT-DVZ}. Moreover, from Theorem \ref{one-dim} we have that
	\begin{equation*}
	\mathbb{E}[T_{\ell;t }^{4}]=3+\frac{3}{2\pi ^{3}}\frac{\log \ell }{\nu _{t}}%
	+o\left( \frac{\log \ell }{\nu _{t}}\right) \text{ }.
	\end{equation*}%
	Hence we obtain
	\begin{eqnarray*}
		|\mathbb{E}[g(F_d)-\mathbb{E}[g(Z_{d})]| &\leq
		&B_{3}(g;d)\sum_{i=1}^{d}(\text{cum}_{4}(F_{d;i}))^{1/2} \\
		&\sim &B_{3}(g;d)\sqrt{\frac{3}{2\pi ^{3}}}\sum_{i=1}^{d}\left( \frac{\log
			\ell }{ \nu _{t}}\right) ^{1/2}
	\end{eqnarray*}%
	where $F_{d;i}$ is the $i$-th component of the vector $F_d$ and 
	
	\begin{equation*}
	B_{3}(g;d)=\frac{\sqrt{2d}}{4}M_{2}(g)+\frac{2\sqrt{d\text{Tr}(\Gamma _{d})}}{9}%
	M_{3}(g).
	\end{equation*}%
	We recall that for a zero mean random variable $F$, the fourth-cumulant $\text{cum}_4(F)$ is given by $$\text{cum}_{4}(F)=\mathbb{E}[F^4]-3(\mathbb{E}[F^2])^2,$$ see for instance \cite{noupebook} for more discussions and details.
	Noting that $\text{Tr}(\Gamma _{d})=d$, the theorem is proved.
\end{proof}

\begin{remark}
	Note that we have established the bound%
	\begin{equation*}
	d_{3}(F,Z_{d})=O\left( d^{2}\sqrt{\frac{\log \ell }{\nu _{t}}}\right) \text{
		,}
	\end{equation*}%
	which holds when the dimension $d$ grows with $\ell $ and $\nu _{t}.$
\end{remark}

\subsection{Convergence of spherical harmonic coefficients}

Let us consider the vector
\begin{equation*}
V_{\ell;t}:=(\hat{a}_{\ell ,-\ell }\left( t\right) ,\dots ,\hat{a}%
_{\ell ,\ell }(t))=\{\hat{a}_{\ell ,m}(t)\}_{m=-\ell
	,\dots ,\ell }\text{ ,}
\end{equation*}
where%
\begin{equation*}
\hat{a}_{\ell ,m}\left(t\right) =\sqrt{\frac{4\pi }{(2\ell +1)}}%
\frac{1}{\sqrt{\nu _{t}}}\sum_{k=1}^{N_{t}}{Y}_{\ell ,m}(\xi _{k})\text{ }.
\end{equation*}%
Observe that each entry of $V_{\ell;t}$ is built by evaluating a different
element of the fully normalized spherical harmonic basis $\left\{ Y_{\ell
	m}\right\} $ over the same set of random points $\left\{ \xi _{k}\right\} $.
As a consequence, the random coefficients are neither independent nor
identically distributed, although they are still uncorrelated; indeed we
have that, for all $m,m^{\prime }=-\ell ,\ldots ,\ell $%
\begin{eqnarray}
\mathbb{E}\left[ \hat{a}_{\ell ,m}\left( t\right) \right]
&=&\int_{\mathbb{S}^{2}}Y_{\ell m}(z)dz=0\text{ }; \nonumber \\
\mathbb{E}\left[ \hat{a}_{\ell ,m}\left( t\right) {\hat{a}}_{\ell
	,m^{\prime }}\left( t\right) \right] &=&\frac{4\pi }{(2\ell +1)}%
\int_{\mathbb{S}^{2}}Y_{\ell m}(z){Y}_{\ell m^{\prime }}(z)dz \nonumber \\
&=&\delta _{m}^{m^{\prime }}\frac{4\pi }{(2\ell +1)}. \label{cov-a}
\end{eqnarray}

\begin{theorem}\label{multidim}
	\label{convV} Let $Z_{2\ell +1}$ be a Gaussian vector of dimension $2\ell +1$
	with zero mean and diagonal variance/covariance matrix equal to $\frac{4\pi}{2\ell +1}I_{2\ell+1}$. Then we have that
	\begin{equation*}
	d_{3}(V_{\ell ;t},Z_{2\ell +1})\leq \sup_{g\in C^{3}}B_{3}(g;\ell )\sqrt{8%
		\sqrt{4\pi }\frac{1.539\log \ell }{\ell \nu _{t}}+O\left( \frac{1%
		}{\ell \nu _{t}}\right) }\text{ ,}
	\end{equation*}%
	where
	\begin{equation*}
	B_{3}(g;\ell ):=\frac{\sqrt{2(2\ell +1)}}{4}M_{2}(g)+\frac{2}{9}\sqrt{(2\ell
		+1)4\pi }M_{3}(g).
	\end{equation*}
\end{theorem}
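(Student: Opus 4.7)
My plan is to apply the multivariate Fourth Moment Theorem for Poisson Wiener chaos (Theorem \ref{FMT-DVZ}, recalled in Section \ref{sec:generalresults}) to the vector $V_{\ell;t}$. Each coordinate $\hat{a}_{\ell,m}(t)=I_{1}(f_{m})$ with kernel $f_{m}(\xi):=\sqrt{\tfrac{4\pi}{(2\ell+1)\nu_{t}}}\,Y_{\ell m}(\xi)$ belongs to the first-order Poisson Wiener chaos, and by (\ref{cov-a}) the covariance matrix of $V_{\ell;t}$ coincides exactly with that of $Z_{2\ell+1}$; hence the covariance-discrepancy contribution in the fourth moment bound drops out. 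The prefactor $B_{3}(g;\ell)$ is then inherited as in Theorem \ref{multifdd}, with $d=2\ell+1$ and $\mathrm{Tr}(\Gamma)=(2\ell+1)\cdot\tfrac{4\pi}{2\ell+1}=4\pi$, which accounts for the second summand $\tfrac{2}{9}\sqrt{(2\ell+1)\cdot 4\pi}\,M_{3}(g)$.

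Only the fourth-cumulant piece then remains. Using the formula $\kappa_{4}(I_{1}(f))=\nu_{t}\int f^{4}\,d\mu$ for first-order Poisson integrals, I would compute
\begin{equation*}
\kappa_{4}(\hat{a}_{\ell,m}(t))=\left(\frac{4\pi}{2\ell+1}\right)^{\!2}\frac{1}{\nu_{t}}\int_{\mathbb{S}^{2}}Y_{\ell m}^{4}(x)\,dx,
\end{equation*}
so that the FMT estimate reduces the task to controlling the aggregate quantity $\sum_{m=-\ell}^{\ell}\int_{\mathbb{S}^{2}}Y_{\ell m}^{4}\,dx$, which enters as $\sqrt{\sum_{m}\kappa_{4}(\hat{a}_{\ell,m}(t))}$ and produces the square-root structure of the stated bound.

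To estimate this aggregate, I would expand $Y_{\ell m}^{2}$ in the spherical harmonic basis via Gaunt coefficients and use the orthogonality relation $\sum_{m}\binom{\ell\,\ell\,L}{m\,-m\,0}^{\!2}=\frac{1}{2L+1}$ for the Wigner $3j$-symbols to collapse the $m$-sum onto the basis-invariant quantity $\tfrac{(2\ell+1)^{2}}{4\pi}\sum_{L}\binom{\ell\,\ell\,L}{0\,0\,0}^{\!2}$, then relate it to $\int_{-1}^{1}P_{\ell}^{4}(t)\,dt$ via Hilb/Darboux asymptotics for Legendre polynomials. The Marinucci--Wigman asymptotic $\int_{-1}^{1}P_{\ell}^{4}(t)\,dt\sim\tfrac{3\log\ell}{\pi^{2}\ell^{2}}$ already used in Theorem \ref{one-dim} then yields $\sum_{m}\kappa_{4}(\hat{a}_{\ell,m}(t))\sim 8\sqrt{4\pi}\,\gamma_{\ell L}\tfrac{\log\ell}{\ell\nu_{t}}$, with $\gamma_{\ell L}$ the resulting asymptotic constant, whence the stated bound follows.

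The main obstacle is the last step: individually, the $L^{4}$-norms of spherical harmonics are highly inhomogeneous across the basis---Sogge's $L^{p}$-bounds imply that $\|Y_{\ell,\pm\ell}\|_{L^{4}}^{4}$ can be as large as $\sqrt{\ell}$---so naively bounding via $\sum_{m}\sqrt{\kappa_{4}(\hat{a}_{\ell,m})}$ followed by Cauchy--Schwarz would only deliver a rate of order $\sqrt{\ell\log\ell/\nu_{t}}$, losing a full factor of $\sqrt{\ell}$. The sharper rate $\sqrt{\log\ell/\nu_{t}}$ requires exploiting the addition formula and the orthogonality of the $3j$-symbols to extract the basis-invariant aggregate directly, whereby the extremal $L^{4}$-blow-ups of individual harmonics average out and leave behind only the logarithmic Marinucci--Wigman contribution.
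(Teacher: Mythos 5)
Your overall architecture coincides with the paper's: apply the multivariate Poisson Fourth Moment Theorem (Theorem \ref{FMT-DVZ}) to the first-chaos vector $V_{\ell;t}$, observe that its covariance matches that of $Z_{2\ell+1}$ exactly so only the fourth cumulants survive, take $B_{3}(g;\ell)$ from $\mathrm{Tr}(\Gamma)=4\pi$, and collapse the $m$-sum of $\kappa_{4}(\hat a_{\ell,m}(t))$ by Clebsch--Gordan orthogonality. But the execution has two genuine problems. First, the bookkeeping around Cauchy--Schwarz is inconsistent: Theorem \ref{FMT-DVZ} produces $\sum_{m}\sqrt{\kappa_{4}(\hat a_{\ell,m})}$, not $\sqrt{\sum_{m}\kappa_{4}}$, and the paper passes from the former to the latter precisely via $\sum_{i}\sqrt{a_{i}}\le\sqrt{2\ell+1}\,\bigl(\sum_{i}a_{i}\bigr)^{1/2}$, which injects the factor $2\ell+1$ under the square root; this is not the ``naive'' lossy route you dismiss in your last paragraph --- it is exactly the sharp one used in the paper. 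Correspondingly, your claimed asymptotics $\sum_{m}\kappa_{4}\sim 8\sqrt{4\pi}\,\gamma_{\ell L}\log\ell/(\ell\nu_{t})$ overstates the true aggregate by a factor $2\ell+1$: Lemma \ref{bound} gives $\sum_{m}\mathrm{cum}_{4}(\hat a_{\ell,m}(t))\le 8\sqrt{4\pi}\,\gamma_{\ell L}\frac{\log\ell}{\ell(2\ell+1)\nu_{t}}+O(\ell^{-2}\nu_{t}^{-1})$, and the $\log\ell/(\ell\nu_{t})$ in the theorem already incorporates the Cauchy--Schwarz factor. Your two slips cancel numerically in the final display, but the argument as written does not hang together.

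Second, and more substantively, the reduction of the aggregate to $\int_{-1}^{1}P_{\ell}^{4}(t)\,dt$ is a misidentification. After summing over $m$ via unitarity, $\sum_{m}\{C_{\ell,-m;\ell,m}^{L,0}\}^{2}=1$, what remains is $\sum_{L}\{C_{\ell,0;\ell,0}^{L,0}\}^{2}/(2L+1)$, i.e.\ the sum of \emph{squared} $3j$ symbols with zero magnetic indices, which is of order $\log\ell/\ell$; by contrast $\int_{0}^{1}P_{\ell}^{4}(t)\,dt=\sum_{L}(2L+1)\,\{C_{\ell,0;\ell,0}^{L,0}\}^{4}/(2L+1)^{2}$ involves \emph{fourth} powers and is of order $\log\ell/\ell^{2}$; the latter governs only the single zonal cumulant $\mathrm{cum}_{4}(\hat a_{\ell,0}(t))$ (Corollary \ref{cum4a}), not the sum over $m$. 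Hence the Marinucci--Wigman $L^{4}$ asymptotics for Legendre polynomials cannot by itself deliver the stated bound. What is needed, and what the paper uses, is the pointwise estimate of Marinucci--Wigman (2011, Lemma A.1), $\{C_{\ell,0;\ell,0}^{L,0}\}^{2}/(2L+1)\le\frac{2}{\pi}\gamma_{\ell L}\,[L(2\ell-L)^{1/2}(2\ell+L)^{1/2}]^{-1}$, followed by an explicit summation over $L$ (splitting at $L=\ell$ and handling $L=0$ and $L=2\ell$ separately). This is also where $\gamma_{\ell L}$ comes from: it is the uniformly bounded sequence $0.596\le\gamma_{\ell L}\le1.539$ of that pointwise bound, not an asymptotic constant produced by $\int P_{\ell}^{4}$. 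Without this (or an equivalent) estimate for $\sum_{L}\{C_{\ell,0;\ell,0}^{L,0}\}^{2}/(2L+1)$, your plan does not reach the stated inequality.
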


It should be noted that the resulting bound is of order $\sqrt{\frac{\log
		\ell }{\nu _{t}}}.$ Before proving the theorem we need some lemmas.

\begin{lemma}\label{lemma}
	We have that
	\begin{equation*}
	\mathbb{E}[\hat{a}_{\ell ,m}^{4}(t)]=\left( \frac{4\pi }{2\ell +1}\right) ^{2}\frac{1%
	}{\nu _{t}}\mathbb{E}\left[ |Y_{\ell m}(\xi _{1})|^{4}\right] +3\left(
	\frac{4\pi }{2\ell +1}\right) ^{2} \text{ .}
	\end{equation*}
\end{lemma}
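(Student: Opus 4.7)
The plan is a direct moment computation using the factorial moment structure of the Poisson point process. Starting from the representation
\[
\hat{a}_{\ell,m}(t) = \sqrt{\frac{4\pi}{(2\ell+1)\nu_t}} \sum_{k=1}^{N_t} Y_{\ell m}(\xi_k),
\]
I would raise to the fourth power and expand as the quadruple sum
\[
\hat{a}_{\ell,m}^{4}(t) = \left(\frac{4\pi}{(2\ell+1)\nu_t}\right)^{2} \sum_{k_1,k_2,k_3,k_4=1}^{N_t} Y_{\ell m}(\xi_{k_1}) Y_{\ell m}(\xi_{k_2}) Y_{\ell m}(\xi_{k_3}) Y_{\ell m}(\xi_{k_4}).
\]
Then I would split this sum according to the set partition of $\{1,2,3,4\}$ induced by the equality pattern of the indices $k_i$. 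For a partition whose blocks have sizes $(b_1,\ldots,b_r)$, the Campbell/Mecke factorial moment formula for a Poisson process with intensity measure $\nu_t\, d\mu$ produces expectation $\prod_{j=1}^{r}\nu_t \int_{\mathbb{S}^2} Y_{\ell m}^{b_j}\, d\mu$ from the corresponding diagonal sub-sum over pairwise distinct indices.

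Next I would use the vanishing property $\int_{\mathbb{S}^2} Y_{\ell m}(z)\, dz = 0$ (valid for $\ell\geq 1$): every partition containing a singleton block contributes zero. This eliminates the three partition types $(1,1,1,1)$, $(2,1,1)$ and $(3,1)$. Only two contributions survive: the unique partition of type $(4)$, giving $\nu_t \int_{\mathbb{S}^2} Y_{\ell m}^4\, d\mu$, and the three pair-partitions of type $(2,2)$, each giving $\nu_t^2 \bigl(\int_{\mathbb{S}^2} Y_{\ell m}^2\, d\mu\bigr)^2 = \nu_t^2$ by orthonormality of the spherical harmonic basis. Summing,
\[
\mathbb{E}\!\left[\Big(\sum_{k=1}^{N_t} Y_{\ell m}(\xi_k)\Big)^{4}\right] = \nu_t \int_{\mathbb{S}^2} Y_{\ell m}^4(z)\, dz + 3\nu_t^2,
\]
and multiplying by the prefactor $\bigl(4\pi/((2\ell+1)\nu_t)\bigr)^2$ yields the claim, once $\mathbb{E}[|Y_{\ell m}(\xi_1)|^4]$ is read as $\int_{\mathbb{S}^2} Y_{\ell m}^4\, d\mu$, consistently with the convention already employed in the proof of Theorem \ref{one-dim}.

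I do not expect any genuine obstacle: the argument is pure bookkeeping of set partitions together with the two identities $\int Y_{\ell m}=0$ and $\int Y_{\ell m}^2=1$. The only subtlety to keep track of is the combinatorial factor $3$ arising from the three distinct pair-partitions of four elements, which is precisely what produces the second summand in the statement.
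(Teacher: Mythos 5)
Your argument is correct and is essentially the paper's own approach: the paper states this lemma without a separate proof, but the identical diagonal-plus-three-pairings decomposition of the Poisson fourth moment (singleton blocks killed by $\int_{\mathbb{S}^2}Y_{\ell m}\,dz=0$, pair blocks giving $\nu_t^2$ by orthonormality, the full block giving $\nu_t\int Y_{\ell m}^4\,dz$) is exactly the computation used for $\mathbb{E}[T_{\ell;t}^4]$ in the proof of Theorem \ref{one-dim}. Your reading of $\mathbb{E}\left[ |Y_{\ell m}(\xi _{1})|^{4}\right]$ as the unnormalized Lebesgue integral $\int_{\mathbb{S}^2}Y_{\ell m}^4(z)\,dz$ is also the paper's convention, as confirmed by the way the lemma feeds into Corollary \ref{cum4a}.
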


\begin{remark}\label{remark}
	From \cite{M e W 2012}, p. 23 we have:%
	\begin{equation*}
	\mathbb{E}\left[ Y_{\ell m}(\xi _{1})^{4}\right] =\frac{(2\ell +1)}{\sqrt{%
			4\pi }}\left[ \sum_{L}\frac{\left( C_{\ell ,0;\ell ,0}^{L,0}\right)
		^{2}\left( C_{\ell ,-m;\ell ,m}^{L,0}\right) ^{2}}{2L+1}\right] \text{ ,}
	\end{equation*}%
	where $\{C_{\ell_1 ,m_1;\ell_2, m_2}^{\ell_3, m_3}\}$ are the Clebsch-Gordan
	coefficients, defined in Appendix \ref{C-G coefficients} (see \cite%
	{Varshalovich} Chapter 8 and \cite{MP} p.77 cap.3.5). \\Note that as $\ell
	\rightarrow \infty $ we have (see p. 16 \cite{M e W 2012})
	
	\begin{equation}\label{Pl4}
	\sum_{L}\frac{\left( C_{\ell ,0;\ell ,0}^{L,0}\right) ^{4}}{2L+1}=
	\int_{0}^{1} P_\ell(t)^4\,dt \sim \frac{3}{2\pi^2} \frac{\log \ell }{\ell
		^{2}} ,
	\end{equation}
\end{remark}

\begin{corollary}
	\label{cum4a}We have that
	\begin{equation*}
	\text{cum}_{4}(\hat{a}_{\ell ,m}\left( t\right) )=\frac{4\pi
		\sqrt{4\pi }}{(2\ell +1)}\frac{1}{\nu _{t}}\left[ \sum_{L}\frac{\left(
		C_{\ell ,0;\ell ,0}^{L,0}\right) ^{2}\left( C_{\ell ,-m;\ell
			,m}^{L,0}\right) ^{2}}{2L+1}\right]
	\end{equation*}%
	and as $\ell \rightarrow \infty $%
	\begin{eqnarray*}
		\text{cum}_{4}(\hat{a}_{\ell ,0}\left( t\right) ) &=&\frac{4\pi
			\sqrt{4\pi }}{(2\ell +1)}\frac{1}{\nu _{t}}\left[ \sum_{L}\frac{\{C_{\ell
				,0;\ell ,0}^{L,0}\}^{4}}{2L+1}\right] \\
		&\sim &\frac{6}{\sqrt{\pi }}\frac{\log \ell }{\ell ^{3}\nu _{t}}\text{ .}
	\end{eqnarray*}
\end{corollary}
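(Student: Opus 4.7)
The plan is to derive the cumulant identity directly from the Lemma that precedes the Corollary, and then plug in the asymptotic for the Clebsch--Gordan sum supplied in the Remark. Since $\hat{a}_{\ell,m}(t)$ is centred with variance $\tfrac{4\pi}{2\ell+1}$ (see \eqref{cov-a}), the fourth cumulant of a centred variable reduces to
\begin{equation*}
\text{cum}_{4}(\hat{a}_{\ell,m}(t))=\mathbb{E}[\hat{a}_{\ell,m}^{4}(t)]-3\bigl(\mathbb{E}[\hat{a}_{\ell,m}^{2}(t)]\bigr)^{2}=\mathbb{E}[\hat{a}_{\ell,m}^{4}(t)]-3\left(\frac{4\pi}{2\ell+1}\right)^{2}.
\end{equation*}
So the first step is to subtract this quantity from the formula given in the Lemma; the constant $3(4\pi/(2\ell+1))^{2}$ cancels exactly, leaving the single term
\begin{equation*}
\text{cum}_{4}(\hat{a}_{\ell,m}(t))=\left(\frac{4\pi}{2\ell+1}\right)^{2}\frac{1}{\nu_{t}}\,\mathbb{E}\bigl[|Y_{\ell m}(\xi_{1})|^{4}\bigr].
\end{equation*}

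The second step is to substitute the Clebsch--Gordan expansion of $\mathbb{E}[Y_{\ell m}(\xi_{1})^{4}]$ given in the Remark. Pulling out the factor $(2\ell+1)/\sqrt{4\pi}$ gives
\begin{equation*}
\text{cum}_{4}(\hat{a}_{\ell,m}(t))=\frac{(4\pi)^{2}}{(2\ell+1)^{2}}\cdot\frac{2\ell+1}{\sqrt{4\pi}}\cdot\frac{1}{\nu_{t}}\sum_{L}\frac{(C_{\ell,0;\ell,0}^{L,0})^{2}(C_{\ell,-m;\ell,m}^{L,0})^{2}}{2L+1},
\end{equation*}
and simplifying $(4\pi)^{2}/\sqrt{4\pi}=4\pi\sqrt{4\pi}$ produces the first displayed identity in the Corollary.

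The third step is the asymptotic for $m=0$. Here $(C_{\ell,-m;\ell,m}^{L,0})^{2}=(C_{\ell,0;\ell,0}^{L,0})^{2}$, so the factors in the numerator combine into $(C_{\ell,0;\ell,0}^{L,0})^{4}$ and one may invoke the displayed asymptotic from the Remark, namely $\sum_{L}(C_{\ell,0;\ell,0}^{L,0})^{4}/(2L+1)\sim \tfrac{3}{2\pi^{2}}\tfrac{\log\ell}{\ell^{2}}$. Inserting this and using $2\ell+1\sim 2\ell$ gives
\begin{equation*}
\text{cum}_{4}(\hat{a}_{\ell,0}(t))\sim \frac{4\pi\sqrt{4\pi}}{2\ell\,\nu_{t}}\cdot\frac{3}{2\pi^{2}}\cdot\frac{\log\ell}{\ell^{2}}=\frac{8\pi^{3/2}\cdot 3}{4\pi^{2}}\cdot\frac{\log\ell}{\ell^{3}\nu_{t}}=\frac{6}{\sqrt{\pi}}\cdot\frac{\log\ell}{\ell^{3}\nu_{t}}.
\end{equation*}

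There is no real obstacle here: everything is algebraic manipulation combined with two inputs already recorded (the formula for $\mathbb{E}[\hat{a}_{\ell,m}^{4}(t)]$ from the preceding Lemma, and the Hilb/Legendre asymptotic for $\int_{0}^{1}P_{\ell}^{4}$ translated into Clebsch--Gordan form). The only point deserving care is the constant bookkeeping, in particular keeping track of the half-integer powers of $4\pi$ that arise from the factor $(2\ell+1)/\sqrt{4\pi}$ appearing in $\mathbb{E}[Y_{\ell m}^{4}(\xi_{1})]$.
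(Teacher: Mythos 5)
Your proposal is correct and follows exactly the route the paper intends: the paper gives no separate proof of the corollary, which is meant to follow directly from the preceding Lemma (fourth moment of $\hat{a}_{\ell,m}(t)$), the variance identity \eqref{cov-a} via $\text{cum}_{4}=\mathbb{E}[X^{4}]-3(\mathbb{E}[X^{2}])^{2}$, and the Clebsch--Gordan expansion and asymptotic recorded in the Remark. Your constant bookkeeping, including the cancellation of $3\left(\tfrac{4\pi}{2\ell+1}\right)^{2}$ and the final evaluation $\tfrac{24\pi^{3/2}}{4\pi^{2}}=\tfrac{6}{\sqrt{\pi}}$, checks out.
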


The first result of Corollary \ref{cum4a} follows by exploiting Remark \ref{remark} in Lemma \ref{lemma} and recalling the definition of the fourth cumulant of a zero mean random variable. The second result is due to (\ref{Pl4}).

We also need the following lemma.
\begin{lemma}
	\label{bound} We have that
	\begin{equation}  \label{eq:bound}
	\frac{4\pi \sqrt{4\pi }}{(2\ell +1)^{2}}\frac{1}{\nu _{t}}\leq \sum_{m=-\ell
	}^{\ell }\text{cum}_{4}(\hat{a}_{\ell ,m}(t))\leq 8\sqrt{4\pi }\frac{1.539
		\log \ell }{\ell (2\ell +1)\nu _{t}}+O\left( \frac{1}{\ell ^{2}\nu _{t}}%
	\right)
	\end{equation}
\end{lemma}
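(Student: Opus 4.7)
My starting point is the explicit Clebsch--Gordan representation of $\text{cum}_4(\hat a_{\ell,m}(t))$ given in Corollary~\ref{cum4a}. Summing that expression over $m\in\{-\ell,\dots,\ell\}$ and exploiting the unitarity of the Clebsch--Gordan coefficients (see \cite{Varshalovich}, Ch.~8), namely $\sum_{m=-\ell}^{\ell}(C^{L,0}_{\ell,-m;\ell,m})^2 = 1$ for each admissible $L$, collapses the double sum over $(m,L)$ and reduces the whole problem to a two-sided estimate of the scalar
\[
S_\ell \;:=\; \sum_L \frac{(C^{L,0}_{\ell,0;\ell,0})^2}{2L+1}, \qquad \sum_{m=-\ell}^{\ell}\text{cum}_4(\hat a_{\ell,m}(t)) \;=\; \frac{4\pi\sqrt{4\pi}}{(2\ell+1)\,\nu_t}\,S_\ell.
\]

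The lower bound in \eqref{eq:bound} is then immediate: retaining only the $L=0$ contribution and using the standard value $(C^{0,0}_{\ell,0;\ell,0})^2 = 1/(2\ell+1)$ gives $S_\ell \geq 1/(2\ell+1)$, and multiplication by the prefactor above recovers exactly the left-hand inequality. The upper bound is the substantive part, and my idea is to decouple the squared Clebsch--Gordan coefficient from the weight $1/(2L+1)$ through Cauchy--Schwarz:
\[
S_\ell \;\leq\; \Bigl(\sum_L \frac{(C^{L,0}_{\ell,0;\ell,0})^4}{2L+1}\Bigr)^{1/2}\Bigl(\sum_L \frac{1}{2L+1}\Bigr)^{1/2}.
\]

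Each factor can be controlled by classical facts. The first factor has a clean analytic interpretation: applying Parseval on $[-1,1]$ to the Legendre expansion $P_\ell(t)^2 = \sum_L (C^{L,0}_{\ell,0;\ell,0})^2 P_L(t)$, together with the parity of $P_\ell^2$, identifies it as $\int_0^1 P_\ell(t)^4\,dt \sim 3\log\ell/(2\pi^2 \ell^2)$ by \cite[Lemma 2.3]{M e W 2011}, already recalled in the remark above. The second factor effectively runs over the even indices $L\in\{0,2,\dots,2\ell\}$ (because $C^{L,0}_{\ell,0;\ell,0}$ vanishes otherwise), and is controlled by a harmonic sum of order $\log\ell$. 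Combining these two estimates yields $S_\ell = O(\log \ell/\ell)$; multiplying by $4\pi\sqrt{4\pi}/((2\ell+1)\nu_t)$ produces the upper bound in \eqref{eq:bound}, with the absolute constants folded into $\gamma_{\ell L}$.

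I expect the main obstacle to be bookkeeping rather than conceptual: tracking the $o(\log\ell/\ell^2)$ remainder of the Marinucci--Wigman asymptotic through the Cauchy--Schwarz step and the prefactor, and showing it is absorbed into the $O(1/(\ell^2\nu_t))$ error term, requires some care. One must also preserve the parity restriction ($L$ even) when estimating the harmonic sum $\sum_L 1/(2L+1)$, as this directly affects the exact form of $\gamma_{\ell L}$.
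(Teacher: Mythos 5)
Your reduction to the scalar $S_\ell$ via Corollary \ref{cum4a} and the unitarity relation $\sum_m (C^{L,0}_{\ell,-m;\ell,m})^2=1$, and your lower bound obtained by retaining only the $L=0$ term with $(C^{0,0}_{\ell,0;\ell,0})^2=1/(2\ell+1)$, coincide exactly with the paper's argument. For the upper bound you take a genuinely different route: the paper converts $\{C^{L,0}_{\ell,0;\ell,0}\}^2/(2L+1)$ into a squared Wigner $3j$ symbol, inserts the pointwise estimate of \cite[Lemma A.1]{M e W 2011}, $\begin{pmatrix}\ell&\ell&L\\0&0&0\end{pmatrix}^2\leq \frac{2}{\pi}\gamma_{\ell L}\frac{1}{L\sqrt{(2\ell-L)(2\ell+L)}}$, treats $L=0$ and $L=2\ell$ separately, and bounds the remaining sum by splitting at $L=\ell$; this is precisely where the factor $8\sqrt{4\pi}\,\gamma_{\ell L}$ originates. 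You instead decouple by Cauchy--Schwarz and use the identity $\sum_L (C^{L,0}_{\ell,0;\ell,0})^4/(2L+1)=\int_0^1 P_\ell(t)^4\,dt$ (already recorded in the paper's Remark and Appendix; your derivation via $P_\ell^2=\sum_L (C^{L,0}_{\ell,0;\ell,0})^2 P_L$ and parity is correct) together with the harmonic sum over even $L$. This is valid and yields $S_\ell\leq \frac{\sqrt{3}}{2\sqrt{2}\,\pi}\frac{\log\ell}{\ell}(1+o(1))$, i.e.\ an upper bound $\sqrt{6}\,\sqrt{4\pi}\,\frac{\log\ell}{\ell(2\ell+1)\nu_t}(1+o(1))$ for the cumulant sum, which is in fact \emph{sharper} than the stated constant; the paper's route buys the specific sequence $\gamma_{\ell L}$ in the statement and an error explicitly of order $1/(\ell^2\nu_t)$, while yours avoids the pointwise $3j$ bound entirely and improves the leading constant. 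Two small caveats, neither a genuine gap: (i) $\gamma_{\ell L}$ is not a free constant into which you may ``fold'' things---it is the fixed sequence of \cite{M e W 2011}, uniformly in $[0.596,1.539]$---so to literally recover \eqref{eq:bound} you should invoke $\gamma_{\ell L}\geq 0.596$ and note $\sqrt{6}<8\cdot 0.596$ (or restate the bound with your constant); (ii) if you use only the asymptotic equivalence $\int_0^1 P_\ell^4\sim \frac{3}{2\pi^2}\frac{\log\ell}{\ell^2}$ without a rate, your remainder is $o\left(\frac{\log\ell}{\ell^2\nu_t}\right)$ rather than $O\left(\frac{1}{\ell^2\nu_t}\right)$; either quote the version of the Marinucci--Wigman lemma with an $O(\ell^{-2})$ error term or absorb the slack into the gap between $\sqrt{6}$ and $8\gamma_{\ell L}$ for $\ell$ large.
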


\begin{proof}[Proof of Lemma \ref{bound}] From Corollary \ref{cum4a} we get
	
	\begin{eqnarray*}
		\sum_{m=-\ell }^{\ell }\text{cum}_{4}(\hat{a}_{\ell, m}(t))&=&\sum_{m=-\ell }^{\ell }%
		\frac{4\pi \sqrt{4\pi }}{(2\ell +1)}\frac{1}{\nu _{t}}\bigg[\sum_{L}\frac{%
			\{C_{\ell ,0;\ell ,0}^{L,0}\}^{2}\{C_{\ell ,-m;\ell ,m}^{L,0}\}^{2}}{2L+1}%
		\bigg]\\
		&=&\frac{4\pi \sqrt{4\pi }}{(2\ell +1)}\frac{1}{\nu _{t}}\sum_{L}\frac{%
			\{C_{\ell ,0;\ell ,0}^{L;0}\}^{2}}{2L+1}\sum_{m=-\ell }^{\ell }\{C_{\ell
			,-m;\ell ,m}^{L,0}\}^{2}.
	\end{eqnarray*}%
	In view of the unitary property (\ref{unitary 1}) recalled in Appendix \ref%
	{C-G coefficients} (see for example eq. (3.62) \cite{MP}),
	%\begin{equation*}
	%\sum_{m_{1}m_{2}}C_{\ell _{1}m_{1}\ell _{2}m_{2}}^{LM}C_{\ell _{1}m_{1}\ell
	%_{2}m_{2}}^{L^{\prime }M^{\prime }}=\delta _{M}^{M^{\prime }}\delta
	%_{L}^{L^{\prime }},
	%\end{equation*}%
	we have
	\begin{equation*}
	\sum_{m=-\ell }^{\ell }\{C_{\ell ,-m;\ell ,m}^{L,0}\}^{2}=1
	\end{equation*}%
	and then we obtain
	\begin{equation}
	\sum_{m=-\ell }^{\ell }\text{cum}_{4}(\hat{a}_{\ell, m}(t))=\frac{4\pi \sqrt{4\pi }}{%
		(2\ell +1)}\frac{1}{\nu _{t}}\sum_{L}\frac{\{C_{\ell ,0;\ell ,0}^{L,0}\}^{2}%
	}{2L+1}.  \label{sumcum4a}
	\end{equation}%
	The relation (\ref{3.67 libro Domenico}) between the Clebsch-Gordan
	coefficients and the 3j Wigner coefficients (defined in Appendix \ref{C-G
		coefficients}), leads to
	\begin{equation*}
	\frac{\{C_{\ell ,0;\ell ,0}^{L,0}\}^{2}}{2L+1}=%
	\begin{pmatrix}
	\ell  & \ell  & L \\
	0 & 0 & 0%
	\end{pmatrix}%
	^{2}.
	\end{equation*}%
	%
	%(see for example \cite{M e W 2011}, \cite{Varshalovich}, \cite{MP} \emph{%
	%@@referenze precise please!@@)}.
	From Lemma A.1 in \cite{M e W 2011} we have that
	
	\begin{equation}
	\begin{pmatrix}
	\ell & \ell & L \\
	0 & 0 & 0%
	\end{pmatrix}%
	^{2}= \frac{2}{\pi }\gamma _{\ell L}\frac{1}{L(2\ell -L)^{1/2}(2\ell
		+L)^{1/2}};  \label{boundWigner3j}
	\end{equation}%
	where $\left\{ \gamma _{\ell L}\right\} _{\ell =1,2,...}$ is a deterministic
	sequence such that $0.596\leq \gamma_{\ell L}\leq 1.539$ uniformly in $\ell$ and $L$. Then%
	\begin{equation*}
	\sum_{L=0}^{2\ell }\frac{\{C_{\ell, 0;\ell, 0}^{L,0}\}^{2}}{2L+1}=\{C_{\ell,
		0;\ell, 0}^{0,0}\}^{2}+\sum_{L=1}^{2\ell -1}\frac{\{C_{\ell, 0;\ell,
			0}^{L,0}\}^{2}}{2L+1}+\frac{\{C_{\ell, 0;\ell, 0}^{2\ell, 0}\}^{2}}{4\ell +1}%
	.
	\end{equation*}%
	In view of (\ref{1 pag 248}) and (\ref{rem8.9MP}), the first and the last
	term of this sum are easily seen to satisfy
	\begin{equation*}
	\{C_{\ell, 0;\ell, 0}^{0,0}\}^{2}=\frac{1}{2\ell +1}\text{ , }\frac{%
		\{C_{\ell, 0;\ell, 0}^{2\ell, 0}\}^{2}}{4\ell +1}\leq \frac{1}{(4\ell +1)^{2}%
	}\text{ .}
	\end{equation*}
	In view of (\ref{boundWigner3j}), we also get
	
	\begin{equation*}
	\sum_{L=1}^{2\ell -1}\frac{\{C_{\ell, 0;\ell, 0}^{L,0}\}^{2}}{2L+1}=
	\frac{2}{\pi }\sum_{L=1}^{2\ell -1}\frac{\gamma _{\ell L}}{L(2\ell
		-L)^{1/2}(2\ell +L)^{1/2}}\text{ .}
	\end{equation*}%
	Now we note that
	\begin{equation*}
	\sum_{L=1}^{2\ell -1}\frac{1}{L(2\ell -L)^{1/2}(2\ell +L)^{1/2}}\leq \frac{1%
	}{(2\ell +1)^{1/2}}\sum_{L=1}^{2\ell -1}\frac{1}{L(2\ell -L)^{1/2}}
	\end{equation*}%
	\begin{equation}  \label{bo}
	=\frac{1}{(2\ell +1)^{1/2}}\sum_{L=1}^{\ell }\frac{1}{L(2\ell -L)^{1/2}}+%
	\frac{1}{(2\ell +1)^{1/2}}\sum_{L=\ell +1}^{2\ell -1}\frac{1}{L(2\ell
		-L)^{1/2}}.
	\end{equation}
	The first sum can be bounded as follows:%
	\begin{equation*}
	\frac{1}{(2\ell +1)^{1/2}}\sum_{L=1}^{\ell }\frac{1}{L(2\ell -L)^{1/2}}\leq
	\frac{1}{(2\ell +1)^{1/2}(\ell )^{1/2}}\sum_{L=1}^{\ell }\frac{1}{L}\leq
	\frac{\log \ell }{\ell }.
	\end{equation*}%
	For the second sum in (\ref{bo}) we have that%
	\begin{equation*}
	\frac{1}{(2\ell +1)^{1/2}}\sum_{L=\ell +1}^{2\ell -1}\frac{1}{L(2\ell
		-L)^{1/2}}\leq \frac{1}{(2\ell +1)^{1/2}(\ell +1)}\sum_{L=\ell +1}^{2\ell -1}%
	\frac{1}{(2\ell -L)^{1/2}}
	\end{equation*}%
	and changing variable $L^{\prime }=2\ell -L$ we get%
	\begin{equation*}
	\frac{1}{(2\ell +1)^{1/2}(\ell +1)}\sum_{L^{\prime }=1}^{\ell -1}\frac{1}{%
		(L^{\prime 1/2})}\leq \frac{1}{(2\ell +1)^{1/2}(\ell +1)}(\ell )^{1/2}\leq
	\frac{1}{\ell }.
	\end{equation*}%
	Finally we have
	\begin{equation*}
	\sum_{L=0}^{2\ell }\frac{\{C_{\ell, 0;\ell, 0}^{L,0}\}^{2}}{2L+1}\leq \frac{2%
	}{\pi }\left( \frac{1.539\log \ell }{\ell }+\frac{1.539}{\ell }+\frac{\pi}{2(4\ell +1)^{2}} +\frac{\pi}{2(2\ell+1)}\right)
	\end{equation*}%
	and exploiting this bound in (\ref{sumcum4a}) we conclude that
	\begin{equation*}
	\sum_{m=-\ell }^{\ell }\text{cum}_{4}(\hat{a}_{\ell, m}(t))\leq 8\sqrt{4\pi }\frac{1.539\log \ell }{\ell (2\ell +1)\nu _{t}}+O\left( \frac{1}{\ell
		^{2}\nu _{t}}\right) .
	\end{equation*}%
	On the other hand it can be easily seen that
	\begin{eqnarray*}
		\sum_{m=-\ell }^{\ell }\text{cum}_{4}(\hat{a}_{\ell, m}(t))&=&\frac{4\pi \sqrt{4\pi }}{%
			(2\ell +1)}\frac{1}{\nu _{t}}\sum_{L}\frac{\{C_{\ell, 0;\ell, 0}^{L,0}\}^{2}%
		}{2L+1}\geq \frac{4\pi \sqrt{4\pi }}{(2\ell +1)}\frac{1}{\nu _{t}}\{C_{\ell
			,0;\ell, 0}^{0,0}\}^{2}\\
		&=&\frac{4\pi \sqrt{4\pi }}{(2\ell +1)}\frac{1}{\nu _{t}}\frac{1}{2\ell +1}=%
		\frac{4\pi \sqrt{4\pi }}{(2\ell +1)^{2}}\frac{1}{\nu _{t}}\text{ ,}
	\end{eqnarray*}%
	where we used (\ref{1 pag 248}) in the second-last equality, and that leads to the thesis of the lemma.
\end{proof}

\begin{proof}[Proof of Theorem \ref{convV}] We exploit the multidimensional Fourth Moment Theorem 
	in \cite{DVZ}, in particular Theorem \ref{FMT-DVZ} in Section \ref{appendix}, to get
	
	\begin{equation}
	|\mathbb{E}[g(V_{\ell;t})]-\mathbb{E}[g(Z_{2\ell+1})]|\leq B_{3}(g;\ell)\sum_{m=-\ell }^{\ell }%
	\sqrt{\text{cum}_{4}(\hat{a}_{\ell ,m}\left( t\right) )}.
	\end{equation}
	Applying the following Cauchy-Schwarz inequality
	\begin{equation}\label{CS}%
	\sum_{i=1}^{d}\sqrt{a_{i}}\leq d^{\frac{1}{2}}\left(
	\sum_{i=1}^{d}a_{i}\right) ^{\frac{1}{2}},
	\end{equation}%
	it follows that
	\begin{equation}
	|\mathbb{E}[g(V_{\ell;t})]-\mathbb{E}[g(Z_{2\ell+1})]|\leq B_{3}(g;\ell)\sqrt{2\ell +1}\sqrt{%
		\sum_{m=-\ell }^{\ell }\text{cum}_{4}(\hat{a}_{\ell, m}(t))}.  \label{eq}
	\end{equation}%
	In view of the definition of $B_{3}(g;d)$ in (\ref{B3}) and using (\ref{cov-a})
	%	that
	%	\begin{equation*}
	%	\mathbb{E}\left[ \hat{a}_{\ell ,m}\left( \Xi _{N_{t}}\right) {\hat{a}}_{\ell
	%	,m^{\prime }}\left( \Xi _{N_{t}}\right) \right] =\delta _{m}^{m^{\prime }}%
	%\frac{4\pi }{(2\ell +1)}
	%\end{equation*}%
	we find
	\begin{equation*}
	B_{3}(g;\ell)=A_{2}(g;\ell)+\frac{2\sqrt{(2\ell +1)4\pi }}{9}M_{3}(g)\text{
		, }
	\end{equation*}
	with
	\begin{equation*}
	A_{2}(g;\ell)=\frac{\sqrt{2(2\ell +1)}}{4 }M_{2}(g)\text{ }.
	\end{equation*}
	Exploiting the upper bound of \eqref{eq:bound} in Lemma \ref{bound}, the
	thesis of the theorem follows.
\end{proof}

\begin{remark}\label{clementi}
	Since the covariance matrix of the vector $V_{\ell;t}$ is positive definite, we can also apply the second part of Theorem \ref{FMT-DVZ} in Section \ref{sec:generalresults} to get a quantitative Central Limit Theorem in terms of the metric $d_2$, defined as follows.
	For any two random vectors $X,Y: \Omega \to \mathbb{R}^d$ such that $\mathbb{E}[||X||^2_{\mathbb{R}^d}], \mathbb{E}[||Y||^2_{\mathbb{R}^d}]<\infty,$ we have that
	$$ d_2(X,Y) =\sup_{h \in \mathcal{I}} |\mathbb{E}[h(X)]- \mathbb{E}[h(Y)]|$$ where $\mathcal{I}$ indicates the collection of all functions $h \in \mathbb{C}^2(\mathbb{R}^d)$ such that $||h||_{Lip} \leq 1$ and $M_2(h)\leq 1$. \\
	Similarly to the proof of Theorem \ref{multidim}, we find
	$$d_2(V_{\ell;t}, Z_{2\ell+1}) \leq \sup_{g\in C^{2}} B_2(g;\ell) \sqrt{8\sqrt{4\pi} \frac{1.539\log \ell}{\ell \nu_t}+ O\left(\frac{1}{\ell \nu_t}\right)}$$
	where
	$$B_2(g;\ell)=A_1(g) + \frac{\sqrt{2\pi} }{6} \sqrt{2\ell+1} M_2(g) \mbox{ and } A_1(g)= \sqrt{\frac{2\ell+1}{4\pi}} \frac{1}{\sqrt{\pi}} M_1(g).$$
\end{remark}

A natural question concerns the relationship between the results of this
section and the quantitative bounds for the convergence of the
finite-dimensional distributions provided in the previous pages. To this
aim, we recall the definition of $F_{d}=( T_{\ell
	;t}(x_{1}),...,T_{\ell ;t}(x_{d})) $ and we simply note that%
\begin{eqnarray*}
	F_{d} &=&%
	\begin{pmatrix}
		Y_{\ell ,-\ell }(x_{1}) & Y_{\ell ,-\ell +1}(x_{1}) & \dots & Y_{\ell ,\ell
		}(x_{1}) \\
		\vdots & \vdots & \vdots & \vdots \\
		Y_{\ell ,-\ell }(x_{d}) & Y_{\ell ,-\ell +1}(x_{d}) & \dots & Y_{\ell ,\ell
		}(x_{d})%
	\end{pmatrix}%
	\begin{pmatrix}
		\widehat{a}_{\ell ,-\ell }(t) \\
		\vdots \\
		\widehat{a}_{\ell ,\ell }(t)%
	\end{pmatrix}
	\\
	&=:&\Psi_{\ell ;d}(a_{\ell ,\cdot })\text{ ,}
\end{eqnarray*}%
where $\Psi_{\ell ;d}:\mathbb{R}^{2\ell +1}\rightarrow \mathbb{R}^{d}$ is a linear function and hence obviously continuous (and bounded).
Indeed, because $\left\vert Y_{\ell ,m}\right\vert \leq \sqrt{\frac{2\ell +1%
	}{2\pi }}$ uniformly over the sphere, for all functions $h:\mathbb{R}^{d}\rightarrow \mathbb{R}$
we can write
\begin{equation*}
h(T_{\ell ;t}(x_{1}),...,T_{\ell ;t}(x_{d}))=h\circ \Psi_{\ell;d}(a_{\ell ,\cdot })=:\widetilde{h}(a_{\ell ,\cdot })\text{ .}
\end{equation*}%
Hence we have that%
\begin{eqnarray*}
	&&\sup_{h\in C^{3}}|\mathbb{E}\left[ h(T_{\ell ;t}(x_{1}),...,T_{\ell
		;t}(x_{d}))\right] -\mathbb{E}\left[ h(Z_{1},...,Z_{d})\right] | \\
	&=&\sup_{h\in C^{3}}|\mathbb{E}\left[ (h\circ \Psi_{\ell
		;d})(a_{\ell ,\cdot })\right] -\mathbb{E}\left[ (h\circ \Psi_{\ell ;d})(Z_{1},...,Z_{2\ell +1})\right] | \\
	&\leq &\sup_{h\in C^{3}}B_{3}(h\circ \Psi_{\ell ;d})\sqrt{8\sqrt{4\pi }\frac{1.539\log \ell }{\ell \nu _{t}}+O\left( \frac{1}{\ell \nu _{t}}\right) }
\end{eqnarray*}%
where%
\begin{eqnarray*}
	B_{3}(h\circ \Psi_{\ell ;d}) &=&\frac{\sqrt{2(2\ell +1)}}{4}
	M_{2}(h\circ \Psi_{\ell ;d})+\frac{2}{9}\sqrt{(2\ell +1)4\pi }
	M_{3}(h\circ \Psi_{\ell ;d}) \\
	&\leq &\frac{\sqrt{2(2\ell +1)}}{4}\sqrt{\frac{2\ell +1}{2\pi }}dM_{2}(h)+%
	\frac{2}{9}\sqrt{(2\ell +1)4\pi }\sqrt{\frac{2\ell +1}{2\pi }}dM_{3}(h) \\
	&=&d\frac{2\ell +1}{4\sqrt{\pi }}M_{2}(h)+\frac{2\sqrt{2}d}{9}%
	(2\ell +1)M_{3}(h)\text{ .}
\end{eqnarray*}%
Here we have used the simple fact that%
\begin{equation*}
\sup_{h:\mathbb{R}^{d}\rightarrow \mathbb{R}}M_{k}(h\circ \Psi_{\ell ;d}) \leq d\sqrt{\frac{2\ell +1}{2\pi }}\sup_{h}M_{k}(h)\text{ , }%
k\in \mathbb{N}\text{ .}
\end{equation*}%
Summing up, we have here a bound of order $\frac{d\sqrt{\ell \log \ell }}{%
	\sqrt{\nu _{t}}}$, to be compared with the bound of order $d^{2}\sqrt{\frac{%
		\log \ell }{\nu _{t}}}$ which was obtained in Theorem \ref{multifdd}.
Overall, we can claim that%
\begin{eqnarray*}
	&&\sup_{h\in \mathcal{C}^{3}\text{ , }|h|_{\mathcal{C}^{3}}<1}|\mathbb{E}%
	\left[ h(T_{\ell ;t}(x_{1}),...,T_{\ell ;t}(x_{d}))\right] -\mathbb{E}\left[
	h(Z_{1},...,Z_{d})\right] | \\
	&=&O\left( d\times (\sqrt{\ell }\wedge d)\times \sqrt{\frac{\log \ell }{\nu
			_{t}}}\right) \text{ .}
\end{eqnarray*}

\subsection{Functional Convergence}

In the previous subsections we presented a number of quantitative
convergence results for sequences of random vectors, such as vectors of
spherical harmonic coefficients or points evaluations over a subset of $d$
points. It is natural to ask whether we can also obtain results for the
sequence of eigenfunctions $\left\{ T_{\ell ,t}(.)\right\} $ considered as
random elements in functional spaces; the answer is affirmative,
thanks to some very recent results in this direction in \cite{BCD}. We shall consider in particular $L^{2}(\mathbb{S}^{2})$ and the Sobolev space $W_{\alpha,2}(\mathbb{S}^2)$, $\alpha>0$, to distinguish the probability metric in the two cases, we shall write $d_{3,L^{2}(\mathbb{S}^{2})}$ and $d_{3,W_{\alpha,2} (\mathbb{S}^2)}$, respectively.
Let us recall also that for a general function space $K$ we have that
$C_b^3(K)$ is the class of real-valued functions on $K$ that have bounded Fr\'echet derivatives up to order three. This space is equipped with the norm 
\[ ||h||_{C_b^3(K)}= \sup_{j=1,2,3} \sup_{x \in K} ||D^j h(x)||_{K^{\otimes j}} .\]

\subsubsection{Quantitative Central Limit Theorems in $L^{2}(\mathbb{S}^{2}).$}

We start by considering the space of $L^{2}(\mathbb{S}^{2})$. Our main result is the following.

\begin{theorem}
	Let $Z$ be a centred Gaussian process with the same covariance operator as $T_{\ell;t}$. We have that
	\begin{equation*}
	d_{3,L^{2}(\mathbb{S}^{2})}(T_{\ell ;t},Z)\leq \left( \frac{1}{4}+\sqrt{\pi }\right) \sqrt{\frac{%
			4\pi }{\nu _{t}}}
	\end{equation*}
\end{theorem}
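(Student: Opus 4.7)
The plan is to view $T_{\ell;t}$ as a first-order Poisson Wiener chaos with values in the Hilbert space $H=L^{2}(\mathbb{S}^{2})$ and to invoke the functional fourth-moment bound of \cite{BCD} directly. Concretely, since $\int_{\mathbb{S}^{2}} Y_{\ell m}(\xi)\,d\mu(\xi)=0$ for $\ell\ge 1$, the field $T_{\ell;t}$ is centered and can be written as the compensated Poisson integral $T_{\ell;t}=I_{1}(f)$ of the kernel
\begin{equation*}
f:\mathbb{S}^{2}\rightarrow H,\qquad f(\xi)(x)=\frac{1}{\sqrt{\nu_{t}}}\sqrt{\frac{2\ell+1}{4\pi}}P_{\ell}(\langle x,\xi\rangle),
\end{equation*}
with respect to the control measure $\nu_{t}\mu$. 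This is the Hilbert-valued analogue of the setup already used in the proofs of Theorems \ref{one-dim} and \ref{multidim}.

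The key structural remark, which is what makes the bound free of any $\ell$-dependence, is that thanks to \eqref{eq:norm1} the $H$-norm of the kernel is \emph{constant} in $\xi$:
\begin{equation*}
\|f(\xi)\|_{H}^{2}=\frac{1}{\nu_{t}}\int_{\mathbb{S}^{2}}\frac{2\ell+1}{4\pi}P_{\ell}^{2}(\langle x,\xi\rangle)\,dx=\frac{1}{\nu_{t}},\qquad \xi\in\mathbb{S}^{2}.
\end{equation*}
In particular the total variance is $\nu_{t}\int_{\mathbb{S}^{2}}\|f(\xi)\|_{H}^{2}\,d\mu(\xi)=4\pi$, matching $\operatorname{Tr}(\operatorname{Cov}(T_{\ell;t}))=\int_{\mathbb{S}^{2}}\mathbb{E}[T_{\ell;t}^{2}(x)]\,dx=4\pi$, while the integrated fourth-moment term equals
\begin{equation*}
\nu_{t}\int_{\mathbb{S}^{2}}\|f(\xi)\|_{H}^{4}\,d\mu(\xi)=\frac{4\pi}{\nu_{t}}.
\end{equation*}
All the dependence on $\ell$ that was painful to control for pointwise evaluations or harmonic coefficients is killed by the $L^{2}(\mathbb{S}^{2})$ integration, because the Legendre kernel has fixed $L^{2}$-mass irrespective of the frequency.

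From here, the functional fourth-moment theorem of \cite{BCD} applied to the Hilbert-valued first-chaos $I_{1}(f)$ yields a bound of the form
\begin{equation*}
d_{3}(T_{\ell;t},Z)\le c_{2}M_{2}(h)\sqrt{\nu_{t}\!\int_{\mathbb{S}^{2}}\!\|f(\xi)\|_{H}^{4}\,d\mu(\xi)}\;+\;c_{3}M_{3}(h)\sqrt{\operatorname{Tr}(\operatorname{Cov})}\sqrt{\nu_{t}\!\int_{\mathbb{S}^{2}}\!\|f(\xi)\|_{H}^{4}\,d\mu(\xi)},
\end{equation*}
with explicit universal constants $c_{2}=1/4$ and the $c_{3}$ from the BCD bound. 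Substituting the two computations above and using $M_{2}(h),M_{3}(h)\le 1$ in the definition of $d_{3}$ collapses the right-hand side to $\bigl(\tfrac{1}{4}+c_{3}\sqrt{4\pi}\bigr)\sqrt{4\pi/\nu_{t}}$, which upon plugging in the BCD constant gives the claimed factor $\tfrac{1}{4}+4\sqrt{\pi}$.

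The main technical obstacle is identifying the correct Hilbert-valued version of the Fourth Moment Theorem in \cite{BCD} and tracking the universal constants so as to obtain the stated $\tfrac{1}{4}+4\sqrt{\pi}$; once that is set up, the estimate of $\|f(\xi)\|_{H}$ is trivial by \eqref{eq:norm1}, and no cumulant bookkeeping analogous to Lemma \ref{bound} is required. This also clarifies the ``apparent paradox'' highlighted in Section 1.3(c): the coarse $L^{2}$ topology essentially sees only the total variance and the pointwise $L^{2}$-mass of the kernel, both of which are insensitive to $\ell$.
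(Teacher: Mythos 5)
Your proposal is correct and leans on the same pillar as the paper's proof, namely the functional fourth-moment bound of \cite{BCD} recalled as Theorem \ref{FMT-BCD}; the difference lies in how the fourth-moment discrepancy is evaluated. The paper computes the three quantities in that bound separately -- $\mathbb{E}[\Vert T_{\ell;t}\Vert^2]=4\pi$, $\mathbb{E}[\Vert T_{\ell;t}\Vert^4]=\frac{4\pi}{\nu_t}+(4\pi)^2+2\frac{(4\pi)^2}{2\ell+1}$ (by expanding over the Poisson points and using the addition formula), and $\Vert S_{\ell;t}\Vert_{HS}^2=\frac{(4\pi)^2}{2\ell+1}$ -- and only at the end observes that the $\ell$-dependent terms cancel, leaving $4\pi/\nu_t$. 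You shortcut this bookkeeping by noting that the $H$-valued kernel has constant norm $\Vert f(\xi)\Vert_H^2=1/\nu_t$ thanks to \eqref{eq:norm1}, whence $\nu_t\int_{\mathbb{S}^2}\Vert f(\xi)\Vert_H^4\,d\mu(\xi)=4\pi/\nu_t$; this is a cleaner way to see why no $\ell$-dependence survives, and it dispenses with the explicit fourth-moment expansion. The one step you leave implicit is the passage from the moment-based quantity appearing in Theorem \ref{FMT-BCD} to your kernel-based quantity: you should either invoke \cite{BCD} in a form stated directly in terms of the kernel, or record the (easy) identity, valid for $H$-valued first-order Poisson chaoses, $\mathbb{E}[\Vert I_1(f)\Vert_H^4]-\left(\mathbb{E}[\Vert I_1(f)\Vert_H^2]\right)^2-2\Vert S\Vert_{HS}^2=\int\Vert f(\xi)\Vert_H^4\,\rho(d\xi)$, which follows by summing over an orthonormal basis the coordinatewise cumulant formula $\mathrm{cum}\left(\langle I_1(f),e_j\rangle,\langle I_1(f),e_j\rangle,\langle I_1(f),e_k\rangle,\langle I_1(f),e_k\rangle\right)=\int\langle f,e_j\rangle^2\langle f,e_k\rangle^2\,d\rho$, together with $\mathbb{E}[X_j^2X_k^2]=\mathrm{cum}_4+\mathbb{E}[X_j^2]\,\mathbb{E}[X_k^2]+2\left(\mathbb{E}[X_jX_k]\right)^2$. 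With that identity in place, your constants ($c_2=1/4$ and $c_3\sqrt{\mathrm{Tr}(\mathrm{Cov})}=\sqrt{4\cdot 4\pi}=4\sqrt{\pi}$) reproduce the paper's bound exactly.
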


\begin{proof} In view of Theorem \ref{FMT-BCD} (see \cite{BCD}, Theorem 3
	and Corollary 1), we need to compute the quantity
	\begin{equation*}
	\mathbb{E}[\left\Vert T_{\ell ;t}\right\Vert ^{4}_{L^{2}(\mathbb{S}^{2})}]-(\mathbb{E}[\left\Vert T_{\ell ;t}\right\Vert
	^{2}_{L^{2}(\mathbb{S}^{2})}])^{2}-2\left\Vert S_{\ell;t }\right\Vert _{HS(L^2 (\mathbb{S}^{2}))}^{2},
	\end{equation*}%
	where $S_{\ell ;t}$ is the covariance operator of $T_{\ell;t }$ and $||\cdot||_{HS}$ denotes the Hilbert-Schmidt norm (see the end of Appendix \ref{sec:generalresults}). First, we have
	that
	
	\begin{equation*}
	\mathbb{E}[||T_{\ell;t }||^{2}_{L^{2}(\mathbb{S}^{2})}]=\mathbb{E}\left[\int_{\mathbb{S}^{2}}|T_{\ell;t
	}(x)|^{2}\,dx\right]=\int_{\mathbb{S}^{2}}\sum_{m_{1}=-\ell}^{\ell}
	\sum_{m_{2}=-\ell}^{\ell}\mathbb{E}[\hat{a}_{\ell, m_{1}}(t)\ \hat{a}_{\ell
		,m_{2}}(t)]Y_{\ell m_{1}}(x)Y_{\ell m_{2}}(x)\,dx
	\end{equation*}%
	\begin{eqnarray*}
		&=&\int_{\mathbb{S}^{2}}\frac{4\pi }{2\ell +1}\sum_{m=-\ell }^{\ell }Y_{\ell
			m}(x)Y_{\ell m}(x)\,dx \\
		&=&\frac{4\pi }{2\ell +1}\sum_{m=-\ell }^{\ell }\int_{\mathbb{S}^{2}}Y_{\ell
			m}(x)Y_{\ell m}(x)dx=4\pi \text{ .}
	\end{eqnarray*}%
	It follows that $(\mathbb{E}[||T_{\ell ;t}||^{2}_{L^{2}(\mathbb{S}^{2})}])^{2}=(4\pi )^{2}.$ Now we
	compute $\mathbb{E}[||T_{\ell ;t}||^{4}_{L^{2}(\mathbb{S}^{2})}]$, which gives
	
	\begin{equation*}
	\mathbb{E}[||T_{\ell ;t}||^{4}_{L^{2}(\mathbb{S}^{2})}]=\mathbb{E}\left[||T_{\ell;t }||^{2}_{L^{2}(\mathbb{S}^{2})}||T_{\ell;t }||^{2}_{L^{2}(\mathbb{S}^{2})}\right]=%
	\mathbb{E}\left[\sum_{m_{1}=-\ell }^{\ell }|\hat{a}_{\ell,
		m_{1}}(t)|^{2}\sum_{m_{2}=-\ell }^{\ell }|\hat{a}_{\ell, m_{2}}(t)|^{2}\right]
	\end{equation*}%
	\begin{equation*}
	=\left( \frac{4\pi }{(2\ell +1)\nu _{t}}\right) ^{2}\mathbb{E}%
	\left[\sum_{m_{1}=-\ell }^{\ell }\sum_{k_{1}k_{2}}Y_{\ell m_{1}}(\xi
	_{k_{1}})Y_{\ell m_{1}}(\xi _{k_{2}})\sum_{m_{2}=-\ell }^{\ell
	}\sum_{k_{3}k_{4}}Y_{\ell m_{2}}(\xi _{k_{3}})Y_{\ell m_{2}}(\xi _{k_{4}})\right].
	\end{equation*}%
	Applying the addition formula we get
	\begin{eqnarray*}
		\mathbb{E}[||T_{\ell;t }||^{4}_{L^{2}(\mathbb{S}^{2})}]&=&\left( \frac{1}{\nu _{t}}\right) ^{2}\mathbb{E}%
		\left[\sum_{k_{1}=1}^{N_t(\mathbb{S}^2)} \sum_{k_{2}=1}^{N_t(\mathbb{S}^2)}P_{\ell }(\langle \xi _{k_{1}},\xi
		_{k_{2}}\rangle )\sum_{k_{3}=1}^{N_t(\mathbb{S}^2)} \sum_{k_{4}=1}^{N_t(\mathbb{S}^2)}P_{\ell }(\langle
		\xi _{k_{3}},\xi _{k_{4}}\rangle )\right]\\
		&=&\left( \frac{1}{\nu _{t}}\right) ^{2}\mathbb{E}\left[\sum_{k_{1}=1}^{N_t(\mathbb{S}^2)}P_{\ell
		}(\langle \xi _{k_{1}},\xi _{k_{1}}\rangle )^{2}\right]\\&&+\left( \frac{1}{\nu _{t}}%
		\right) ^{2}\mathbb{E}\left[\sum_{k_{1}=k_{2}\neq k_{3}=k_{4}}P_{\ell }(\langle
		\xi _{k_{1}},\xi _{k_{2}}\rangle )P_{\ell }(\langle \xi _{k_{3}},\xi
		_{k_{4}}\rangle )\right]\\
		&&	+\left( \frac{1}{\nu _{t}}\right) ^{2}\mathbb{E}\left[\sum_{k_{1}=k_{3}\neq
			k_{2}=k_{4}}P_{\ell }(\langle \xi _{k_{1}},\xi _{k_{2}}\rangle )P_{\ell
		}(\langle \xi _{k_{3}},\xi _{k_{4}}\rangle )\right]\\&&
		+\left( \frac{1}{\nu _{t}}\right) ^{2}\mathbb{E}\left[\sum_{k_{1}=k_{4}\neq
			k_{3}=k_{2}}P_{\ell }(\langle \xi _{k_{1}},\xi _{k_{2}}\rangle )P_{\ell
		}(\langle \xi _{k_{3}},\xi _{k_{4}}\rangle )\right]
	\end{eqnarray*}
	and since $P_{\ell }(0)=1$ for all $\ell $ we obtain%
	\begin{eqnarray*}
		\mathbb{E}[||T_{\ell ;t}||^{4}_{L^{2}(\mathbb{S}^{2})}]	&=&\left( \frac{1}{\nu _{t}}\right) ^{2}\mathbb{E}\left[\sum_{k_{1}=1}^{N_t(\mathbb{S}^2)}1\right]+%
		\left( \frac{1}{\nu _{t}}\right) ^{2}\mathbb{E}\left[\sum_{k_{1}=k_{2}\neq
			k_{3}=k_{4}}1\right] \\
		&&
		+2\left( \frac{1}{\nu _{t}}\right) ^{2}\mathbb{E}\left[\sum_{k_{1}=k_{3}\neq
			k_{2}=k_{4}}P_{\ell }(\langle \xi _{k_{1}},\xi _{k_{2}}\rangle )^{2}\right]
		\\
		&=&\frac{4\pi }{\nu _{t}}+(4\pi )^{2}\left( \frac{1}{\nu _{t}}\right)
		^{2}\nu _{t}^{2}
		%\\
		%&&
		+\left( \frac{1}{\nu _{t}}\right) ^{2}2\nu _{t}^{2}\int_{(\mathbb{S}%
			^{2})^{2}}P_{\ell }(\langle \xi _{k_{1}},\xi _{k_{2}}\rangle )^{2}\,d\xi
		_{k_{1}}\,d\xi _{k_{2}}\\
		&=&\frac{4\pi }{\nu _{t}}+(4\pi )^{2}+2(4\pi )\frac{4\pi }{2\ell +1}.
	\end{eqnarray*}%
	The covariance operator $S_{\ell ;t}$ is such that
	\begin{equation*}
	||S_{\ell;t }||_{HS(L^2(\mathbb
		S^2))}^{2}=\sum_{m=-\ell}^{\ell} \sum_{m^\prime=-\ell}^{\ell}
	\mathbb{E}[a_{\ell, m}(t)a_{\ell ,m^{\prime }}(t)]^{2}=\sum_{m=-\ell}^{\ell}
	\sum_{m^{\prime }=-\ell}^{\ell} \left( \delta _{m}^{m^{\prime }}\frac{4\pi }{%
		2\ell +1}\right) ^{2}=\frac{(4\pi )^{2}}{2\ell +1}\text{ ,}
	\end{equation*}%
	and then we finally obtain%
	\begin{eqnarray*}
		&&\mathbb{E}[\left\Vert T_{\ell ;t}\right\Vert ^{4}_{L^{2}(\mathbb{S}^{2})}]-(\mathbb{E}[\left\Vert T_{\ell ;t}\right\Vert_{L^{2}(\mathbb{S}^{2})}
		^{2}])^{2}-2\left\Vert S_{\ell;t }\right\Vert _{HS(L^2(\mathbb
			S^2))} \\
		&&=\frac{4\pi }{\nu _{t}}+(4\pi )^{2}+2\frac{(4\pi )^{2}}{2\ell +1}-(4\pi
		)^{2}-2\frac{(4\pi )^{2}}{2\ell +1}=\frac{4\pi }{\nu _{t}}\text{ .}
	\end{eqnarray*}%
	Exploiting Theorem \ref{FMT-BCD} (see also \cite{BCD}) we get the thesis of
	the theorem.
\end{proof}

As mentioned above, it may come at first sight as a surprise that the rate of
convergence in this functional setting (i.e., $1/\sqrt{\nu _{t}})$ does not
depend on the index $\ell $ and it is indeed faster than in the
finite-dimensional case. The apparent paradox is solved noting that the topology here is too coarse to imply convergence of the finite-dimensional distributions. In the next subsection, we investigate convergence in functional spaces with a finer topological structure.

\subsubsection{Quantitative Central Limit Theorems in $W_{\alpha,2}(\mathbb{S}^{2}).$} \label{2.3.2}

Now we consider the random eigenfunctions taking values in Sobolev
spaces $W_{\alpha,2}(\mathbb{S}^{2})$, $\alpha >0$, on the sphere, i.e., the spaces of functions $f \in L(\mathbb{S}^2)$,  $f=\sum_{\ell,m}a_{\ell,m}Y_{\ell,m}$, with finite norm
\begin{equation*}
\left\| f \right\|^2_{W_{\alpha,2} (\mathbb{S}^2)}=\sum_{\ell\geq 0} \sum_{m=-\ell}^{\ell} \left(1+\sqrt{\ell(\ell+1)}\right)^{2\alpha}\left \vert a_{\ell,m}\right \vert^2.
\end{equation*}
Our main result here is the following.
\begin{theorem}\label{Sobolev}
	Let $Z$ be a centred Gaussian process with the same covariance operator as $T_{\ell;t}$. We have that
	\begin{equation*}
	d_{3,W_{\alpha,2}}(T_{\ell ;t},Z)\leq
	\frac{\sqrt{\pi} \left(1+\sqrt{\ell(\ell+1)}\right)^{2\alpha}}{2\sqrt{\nu
			_{t}}} + \frac{2\pi \left(1+\sqrt{\ell(\ell+1)}\right)^{3\alpha}}{\sqrt{\nu _{t}}}.
	\end{equation*}
\end{theorem}
\begin{proof}
	First note that 
	\begin{equation*}
	\mathbb{E}\left[\left\| T_{\ell;t}\right\|_{W_{\alpha,2}(\mathbb{S}^2)}^{4}\right] =
	\left(1+\sqrt{\ell(\ell+1)}\right)^{4\alpha}\mathbb{E}\left[||T_{\ell;t }||^{4}_{L^2(\mathbb{S}^2)}\right]
	\end{equation*}
	and 
	\begin{equation}\label{W2}
	\mathbb{E}\left[\left\| T_{\ell;t}\right\|_{W_{\alpha,2}(\mathbb{S}^2)}^{2}\right] =
	\left(1+\sqrt{\ell(\ell+1)}\right)^{2\alpha}\mathbb{E}\left[||T_{\ell;t }||^{2}_{L^2(\mathbb{S}^2)}\right].
	\end{equation}
	Indeed, we have that
	\begin{eqnarray*}
		\mathbb{E}\left[\left\| T_{\ell;t}\right\|_{W_{\alpha,2}(\mathbb{S}^2)}^{4}\right] &=& 	\mathbb{E}\left[\left\| T_{\ell;t}\right\|_{W_{\alpha,2}(\mathbb{S}^2)}^{2}\left\| T_{\ell;t}\right\|_{W_{\alpha,2}(\mathbb{S}^2)}^{2}\right]\\
		&=& 	\mathbb{E}\left[\sum_{m=-\ell}^{\ell}\sum_{m'=-\ell}^{\ell}  \left(1+\sqrt{\ell(\ell+1)}\right)^{4\alpha} \left\vert \widehat{a}_{\ell,m}\right\vert^2\left\vert \widehat{a}_{\ell,m^\prime}\right\vert^2\right]\\
		&=& \left(1+\sqrt{\ell(\ell+1)}\right)^{4\alpha}\mathbb{E}\left[\sum_{m=-\ell}^{\ell}\sum_{m'=-\ell}^{\ell}   \left\vert \widehat{a}_{\ell,m}\right\vert^2\left\vert \widehat{a}_{\ell,m^\prime}\right\vert^2\right]\\
		&=&\left(1+\sqrt{\ell(\ell+1)}\right)^{4\alpha}\mathbb{E}\left[||T_{\ell;t }||^{4}_{L^{2}(\mathbb{S}^{2})}\right],
	\end{eqnarray*}
	where in the last equation we used Parseval's identity. Similarly (\ref{W2}) holds.
	In view of the computations of the previous section and this remark, we conclude that
	\[ 	\mathbb{E}\left[\left\| T_{\ell;t}\right\|_{W_{\alpha,2}(\mathbb{S}^2)}^{4}\right] =\left(1+\sqrt{\ell(\ell+1)}\right)^{4\alpha} \left( \frac{4\pi }{\nu _{t}}+(4\pi )^{2}+2(4\pi )\frac{4\pi }{2\ell +1}\right)\] 
	and 
	\[ 	\mathbb{E}\left[\left\| T_{\ell;t}\right\|_{W_{\alpha,2}(\mathbb{S}^2)}^{2}\right] =4\pi\left(1+\sqrt{\ell(\ell+1)}\right)^{2\alpha}. \]
	Furthermore, letting $\left\{ e_i \colon i \geq 1 \right\}$ be an
	orthonormal basis of $W_{\alpha,2}(\mathbb{S}^{2})$, we can compute
	$\left\lVert S_{\ell;t} \right\rVert^2_{HS(W_{\alpha,2})}$ as 
	\begin{align*}
	\left\lVert S_{\ell;t} \right\rVert^2_{HS(W_{\alpha,2})} &= \sum_{i\geq
		1}^{}\left\lVert \mathbb{E}\left( \left\langle T_{\ell;t},e_i
	\right\rangle_{W_{\alpha,2}(\mathbb{S}^{2})}T_{\ell;t}
	\right)
	\right\rVert^2_{W_{\alpha,2}(\mathbb{S}^2)}\\
	&=\sum_{i \geq 1}^{}\left\lVert \sum_{m=-\ell}^{\ell}\mathbb{E}\left(\widehat{a}_{\ell,m}(t)^2
	\right)\left\langle Y_{\ell m},e_i
	\right\rangle_{W_{\alpha,2}(\mathbb{S}^{2})}Y_{\ell
		m}\right\rVert^2_{W_{\alpha,2}(\mathbb{S}^2)}\\
	&= \sum_{m=-\ell}^{\ell}\left( 1+ \sqrt{\ell(\ell+1)}
	\right)^{2\alpha}\left( \frac{4\pi}{2\ell+1} \right)^2 \sum_{i
		\geq 1}^{}\left\langle Y_{\ell m},e_i
	\right\rangle^2_{W_{\alpha,2}(\mathbb{S}^2)}\\
	&= \sum_{m=-\ell}^{\ell}\left( 1+ \sqrt{\ell(\ell+1)}
	\right)^{4\alpha}\left( \frac{4\pi}{2\ell+1} \right)^2\\
	&= \left( 1+ \sqrt{\ell(\ell+1)}
	\right)^{4\alpha} \frac{(4\pi)^2}{2\ell+1}.
	\end{align*}
	We now have all the necessary elements to apply Theorem \ref{FMT-BCD}
	as in the previous subsection,
	from which the result follows after elementary algebraic manipulations.
\end{proof}
As a final result we want to show that, for $\alpha>\frac{3}{2}$, a
quantitative Central Limit Theorem in Sobolev space does indeed imply
the quantitative Central Limit Theorem for the marginal distribution
at every given location on the sphere. We start by noting that
\begin{eqnarray*}
	\left\Vert f\right\Vert _{L^{\infty }(\mathbb{S}^2)} &=&\sup_{x}|\sum_{\ell
	}\sum_{m}a_{\ell m}(f)Y_{\ell m}(x)| \\
	&\leq &\sum_{\ell }\sum_{m}|a_{\ell m}(f)|\sup_{x}|Y_{\ell m}(x)| \\
	&\leq &\sum_{\ell }\sum_{m}|a_{\ell m}(f)|\sqrt{\frac{2\ell +1}{2\pi}},
\end{eqnarray*}%
whence%
\begin{eqnarray*}
	\left\Vert f\right\Vert _{L^{\infty }(\mathbb{S}^2)}^{2} &\leq &\frac{1}{2\pi }\left\{
	\sum_{\ell }\sum_{m}|a_{\ell m}(f)|\sqrt{2\ell +1}\right\} ^{2}. 
\end{eqnarray*}
Multiplying and dividing by
$(1+\sqrt{\ell(\ell+1)})^\alpha\sqrt{2\ell+1} $ and then applying the
Cauchy-Schwarz inequality twice, we get
\begin{eqnarray*}
	\left\Vert f\right\Vert _{L^{\infty }(\mathbb{S}^2)}^{2}
	%&\leq &  \frac{1}{4\pi}
	%\sum_{\ell } \left( \sum_{m}|a_{\ell m}(f)|\frac{(1+\sqrt{\ell(\ell+1)})^\alpha}{\sqrt{2\ell+1}} \right) ^{2} 	\sum_{\ell } \left( \frac{2\ell +1} {(1+\sqrt{\ell(\ell+1)})^\alpha}   \right) ^{2}\\
	&\leq &  \frac{1}{2\pi}
	\sum_{\ell } (2\ell+1) \sum_{m}|a_{\ell m}(f)|^2\frac{(1+\sqrt{\ell(\ell+1)})^{2\alpha} }{(2\ell+1)}	\sum_{\ell }  \frac{(2\ell +1)^2} {(1+\sqrt{\ell(\ell+1)})^{2\alpha}} \\
	&=&  \frac{1}{2\pi} ||f||_{W_{\alpha,2}(\mathbb{S}^2)}^2	\sum_{\ell }  \frac{(2\ell +1)^2} {(1+\sqrt{\ell(\ell+1)})^{2\alpha}} \\
	&\leq&  \frac{2}{\pi} ||f||_{W_{\alpha,2}(\mathbb{S}^2)}^2	\zeta(2\alpha-2),
\end{eqnarray*}
where as usual
\[
\zeta (2\alpha -2)=\sum_{\ell =1}^{\infty }\frac{1}{\ell ^{2\alpha -2}}<\infty
\]%
as $\alpha>\frac{3}{2}$. Hence, we have that%
\[
\left\Vert f\right\Vert _{L^{\infty }(\mathbb{S}^2)}^{2}<\frac{2}{\pi} \zeta(2\alpha-2)\times \left\Vert
f\right\Vert _{W_{\alpha,2}(\mathbb{S}^2)}^{2}.
\]%
Because of this inequality, the topology
induced by the norm $\left\Vert .\right\Vert _{W_{\alpha,2}(\mathbb{S}^2)}$ is
finer than the topology generated by $\left\Vert .\right\Vert _{L^{\infty
	}(\mathbb{S}^2)}$; hence a function continuous with respect to the latter is certainly continuous with respect to the former as well. Therefore%
\begin{equation*}
\sup_{h\text{ continuous w.r.t. }\left\Vert .\right\Vert _{L^{\infty
		}(\mathbb{S}^2)}}|\mathbb{E}h(X)-\mathbb{E}h(Y)| \leq \sup_{h\text{ continuous w.r.t. }\left\Vert \cdot\right\Vert _{W_{\alpha,2}(\mathbb{S}^2)}}|\mathbb{E}h(X)-\mathbb{E}h(Y)|\text{ .}
\end{equation*}%
Now we show that $d_{3,W_{\alpha,2}}(X_{\ell },Z_{\ell })\rightarrow 0$ implies%
\[
\mathbb{E}g(X_{\ell }(x))\rightarrow \mathbb{E}g(Z_{\ell }(x))\text{ for all
}g\in C_{b}^{3}(\mathbb{R)}\text{ ,}
\]
for fixed $x \in \mathbb{S}^2$, which in turn implies $X_{\ell }(x)\rightarrow _{d}N(0,1)$
because $d_{3}$ metrizes convergence in distribution, in particular on $%
\mathbb{R}.$ Actually we show the following, slighlty stronger result.

\begin{corollary}\label{final}
	For $\alpha>\frac{3}{2}$, we have that
	\[
	d_{3}(X_{\ell }(x),Z_{\ell }(x))=\sup_{g\in C_{b}^{3}(\mathbb{R)}}\left\vert
	\mathbb{E}g(X_{\ell }(x))-\mathbb{E}g(Z_{\ell }(x))\right\vert \leq
	C(\alpha)d_{3,W_{\alpha,2}}(X_{\ell },Z_{\ell })\text{ ,}
	\]%
	where the term $C(\alpha)$ does not depend on $\ell$.
\end{corollary}
\begin{proof}
	We can write%
	\[
	\mathbb{E}g(X_{\ell }(x))=\mathbb{E}(g\circ \pi _{x}(X_{\ell }(.)))\text{
		where }g\circ \pi _{x}\in C_{b}\text{ ,}
	\]%
	where the evaluation map $\pi _{x}:\pi _{x}(X_{\ell })=X_{\ell }(x)$ is continuous with respect to the
	Sobolev norm (because it is continuous with respect to the sup norm).
	
	Note that the Gateaux derivatives of the evaluation functionals are given by%
	\[
	\frac{|\pi _{x}(X_{\ell }+tH)-\pi _{x}(X_{\ell })|}{t}=H(x)=\pi _{x}(H)\text{
		, } \forall \text{ } H \in W_{\alpha,2}  \text{
		, } 
	\]%
	so that the Fr\'echet derivative ($(D\pi _{x})(X_{\ell }))H=H(x),$ that is ($%
	(D\pi _{x})(X_{\ell }))=\pi _{x}.$ Note also that the (dual) norm of $\pi _{x}$ is
	bounded, indeed by its definition we have that%
	\begin{eqnarray*}
		\left\Vert \pi _{x}(.)\right\Vert _{W_{\alpha,2}^{\ast }} &:&=\sup_{h:\left\Vert
			h\right\Vert _{W_{\alpha,2}}=1}|\pi _{x}(h)|=\sup_{h}\frac{|h(x)|}{\left\Vert
			h\right\Vert _{W_{\alpha,2}(\mathbb{S}^2)}} \\
		&\leq &\sup_{h}\frac{\left\Vert h\right\Vert _{L^{\infty }(\mathbb{S}^2)}}{%
			\left\Vert h\right\Vert _{W_{\alpha,2}(\mathbb{S}^2)}}\leq \frac{2}{\pi}\zeta (2\alpha-2)\text{ .}
	\end{eqnarray*}%
	Similar results are obtained if we take the second or third order
	Fr\'echet derivatives, with the same bound. Therefore we have that%
	\[
	d_{3}(X_{\ell }(x),Z_{\ell }(x))=\sup_{g\in C_{b}^{3}(\mathbb{R)}}\left\vert
	\mathbb{E}g(X_{\ell }(x))-\mathbb{E}g(Z_{\ell }(x))\right\vert \leq
	C(\alpha)d_{3,W_{2,\alpha}}(X_{\ell },Z_{\ell }),
	\]%
	which proves the claim with $C(\alpha):=\frac{2}{\pi}\zeta (2\alpha-2)$.
\end{proof}

%@@@@@@@@@@@@@@@@@@@@@@@@@@@@@@@@@
%@@@@@@@@@@@@@@@@@@
%
%
%
%\[
%d_{3}(X_{\ell }(x),Z_{\ell }(x))=\sup_{g\in C_{b}^{3}(\mathbb{R)}}\left\vert
%\mathbb{E}g(X_{\ell }(x))-\mathbb{E}g(Z_{\ell }(x))\right\vert = 
%\sup_{\sup_{j=1,2,3}} \sup_{|X_\ell(x)-X_\ell(y)|} \left\vert
%\mathbb{E}[D^jg(X_{\ell }(x))-D^jg(Z_{\ell }(x))]\right\vert
%\]
%\[= \sup_{\sup_{j=1,2,3}} \sup_{|X_\ell(x)-X_\ell(y)|} \left\vert
%\mathbb{E}[D^jg(\pi_x(X_{\ell }))-D^jg(\pi_x(Z_{\ell }(x)))]\right\vert
%\]
%
%$h=g \circ \pi_x \in C_{b}^3(W_{\alpha,2})$ 
%\[
%=\sup_{\sup_{j=1,2,3}} \sup_{|X_\ell(x)-X_\ell(y)|} \left\vert
%\mathbb{E}[D^jh(X_{\ell })D^j\pi_x(X_\ell)-D^jh(Z_{\ell }(x))D^j\pi_x(Z_\ell)]\right\vert \]
%\[
%\leq
%||D^j\pi_x(\cdot)||_{W_{\alpha,2}^*} \sup_{j=1,2,3} \sup_{|X_\ell(x)-X_\ell(y)|} \left \vert\mathbb{E}[D^jh(X_{\ell })D^j\pi_x(X_\ell)-D^jh(Z_{\ell}(x))D^j\pi_x(Z_\ell)]\right\vert
% \]
%
%
%now we use the fact that continuity in $L^\infty$ implies continuity in $W^{\alpha,2}$ and we conclude the proof.
%
%\[
%\leq ||D^j\pi_x(\cdot)||_{W_{\alpha,2}^*}
% d_{3,W_{2,\alpha}}(X_{\ell },Z_{\ell })\text{ .}
%\]
%
%@@@@@@@@@@@@@@@@@@@@@@@@@@@@
%

\section{Appendix}\label{appendix}

In this Appendix, we collect for convenience a number of background results
on Fourth Moment Theorems in a Poisson environment and on integrals of
spherical harmonics. We start introducing some notation and definitions.

\subsection{Wiener chaos in a Poisson environment}

\label{sec:generalresults} We now present, in a form properly adapted to our
goals, some upper bounds related to random variables living in the first
Wiener chaos of a Poisson random measure. The first two bounds have been
proved in \cite{DVZ} and provide a Fourth Moment Theorem on the Poisson
space for the univariate and the multivariate case respectively. The third
bound appears in \cite{BCD} and concerns a quantitative and functional
Central Limit Theorem for convergence to a Gaussian process. \newline
We start by recalling some basic concepts on Poisson random measures and
Wiener chaos. Assuming that we are working on a suitable probability space, $%
\left( \Omega ,\mathcal{F},P\right) $, the following definition is standard:

\begin{definition}[Poisson random measure]
	Let $\left( \Theta ,\mathcal{A},\rho \right) $ be a $\sigma $-finite measure
	space, such that $\rho $ has no atoms. A Poisson random measure on $\Theta $
	with intensity measure $\rho $ is a collection of random variables $\left\{
	N(A):A\in \mathcal{A}\right\} $, taking values in the space $\mathcal{Z}%
	_{+}\cup \left\{ \infty \right\} $, characterized by the following two
	properties:
	
	\begin{enumerate}
		\item for every $A \in \mathcal{A}$, $N (A)$ has Poisson distribution with
		intensity $\rho(A)$;
		
		\item for $A_1,\ldots , A_n \in \mathcal{A}$ pairwise disjoint, $N (A_1),
		\ldots , N (A_n)$ are independent.
	\end{enumerate}
	
	The centred Poisson random measure $\hat{N}$ is defined by $\hat{N} := N -\rho$.
\end{definition}

From now on, for the sake of brevity, we will make use of the shorthand
notation $L^{p}\left( \rho \right) $ to denote the Lebesgue space $%
L^{p}\left( \Theta ,\mathcal{A},\rho \right) $, while, for $p\geq 2$, we
will denote with $L_{s}^{p}\left( \rho \right) \subset L^{p}\left( \rho
\right) $ the subspace of symmetric functions.

\begin{definition}[Wiener--It\^o integrals and first Wiener chaos]
	For every deterministic function $h \in L_2^2\left(\rho\right)$, the
	Wiener--It\^o integral of $h$ with respect to $\hat{N}$ is given by
	\begin{equation*}
	I_1\left(h\right) = \int_{\Theta} h\left(z\right)\hat{N}\left(dz\right).
	\end{equation*}
	The Hilbert space composed of the random variables of the form $%
	I_1\left(h\right)$, where $h \in L^2_s \left(\rho\right)$, and labeled by $%
	W_1$, is called the \emph{first Wiener chaos} associated with the Poisson
	measure $N$.
\end{definition}

In this paper, we choose $\Theta= \mathbb{R}_+ \times \mathbb{S}^2$, while $%
\mathcal{A}$ is the class of Borel subsets of $\Theta$, labeled by $%
\mathcal{B}(\Theta)$. We denote with $N_t$ a Poisson random measure on $%
\Theta$, whose intensity is given by the product measure $\rho=\lambda
\times \mu$. The first term, which can be read as the time component, is
given by $\lambda \left(ds \right) = \nu \times \ell(ds)$, where $\nu>0$ is
a fixed parameter, while $\ell$ is the Lebesgue measure, so that for any $t
\in \mathbb{R}$, $\lambda\left(\left[0, t\right]\right) := \nu \times t =
\nu_t$. Regarding the spherical component, $\mu$ is assumed to be a
probability measure on $\mathbb{S}^2$, associated to a density $f$ so that $%
\mu(dx) = f(x)dx$. Given this setting, we will denote by $N_t$, $t > 0$, the
Poisson measure on $\left(\mathbb{S}^2,\mathcal{B}\left(\mathbb{S}%
^2\right)\right)$ defined by $N_t\left(B\right) := N\left(\left[0, t\right]
\times B\right)$, $B \in \mathcal{B} \left(\mathbb{S}^2\right)$ with
intensity $\mu_t=\nu_t \times \mu$.

\begin{theorem}
	\label{FMT-DP}[Quantitative Fourth Moment Theorem (unidimensional case),
	\cite[Theorem 2.1 and Corollary 1.3]{DVZ} and \cite[Theorem 1.3]{DP}] For $%
	\ell \in \mathbb{N}$, let $F\in W_{1}$, while $Z\sim \mathcal{N}(0,1)$
	denote a standard normal random variable. Moreover, assume that $Var(F)=1$
	and $\mathbb{E}[F^{4}]<\infty $. Then it holds that:
	\begin{equation*}
	d_{W}(F,Z)\leq c_{1}\sqrt{\mathbb{E}[F^{4}]-3},
	\end{equation*}%
	where
	\begin{equation*}
	c_{1}:=\frac{1}{\sqrt{2\pi }}+\frac{2}{3}.
	\end{equation*}%
	Moreover, it holds that
	\begin{equation*}
	d_{Kol}(F,N)\leq \left( 11+(\mathbb{E}[F^{4}])^{1/2}+(\mathbb{E}%
	[F^{4}])^{1/4}\right) \sqrt{\mathbb{E}[F^{4}]-3}.
	\end{equation*}
\end{theorem}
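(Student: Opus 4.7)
The plan is to follow the classical Stein--Malliavin strategy on the Poisson space, specialized to random variables in the first Wiener chaos where the computations are particularly clean. Writing $F=I_{1}(h)$ for some $h\in L^{2}(\rho)$ with $\|h\|_{L^{2}(\rho)}^{2}=\Var(F)=1$, the cumulant formula for first-chaos Poisson integrals gives $\kappa_{n}(F)=\int_{\Theta}h(z)^{n}\,\rho(dz)$ for every $n\ge 2$ (obtainable directly from the log-Laplace transform $\log\mathbb{E}[e^{tI_{1}(h)}]=\int(e^{th}-1-th)\,d\rho$), so that $\mathbb{E}[F^{4}]-3=\kappa_{4}(F)=\int_{\Theta}h(z)^{4}\,\rho(dz)$. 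Both claimed bounds therefore reduce to estimates of the relevant distances in terms of $\sqrt{\int h^{4}\,d\rho}$.

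For the Wasserstein bound, I would fix a $1$-Lipschitz test function $\eta$ and let $f=f_{\eta}$ solve the Stein equation $f'(x)-xf(x)=\eta(x)-\mathbb{E}[\eta(Z)]$, using the classical estimates $\|f'\|_{\infty}\le\sqrt{2/\pi}$ and $f'$ Lipschitz with constant at most $2$. The crucial input is the Poisson integration-by-parts formula (Mecke's identity): the add-one-cost operator satisfies $D_{z}F=h(z)$ on $F=I_{1}(h)$, giving
\begin{equation*}
\mathbb{E}[Ff(F)]=\int_{\Theta}h(z)\,\mathbb{E}\bigl[f(F+h(z))-f(F)\bigr]\,\rho(dz).
\end{equation*}
Combining this with the trivial rewriting $\mathbb{E}[f'(F)]=\mathbb{E}[f'(F)]\int h^{2}\,d\rho$ and a first-order Taylor expansion of $f(F+h(z))-f(F)-f'(F)h(z)$ via the Lipschitz bound on $f'$, the Stein representation $|\mathbb{E}[\eta(F)]-\mathbb{E}[\eta(Z)]|=|\mathbb{E}[f'(F)-Ff(F)]|$ reduces to controlling $\int_{\Theta}|h(z)|^{3}\,\rho(dz)$, which by Cauchy--Schwarz is at most $\sqrt{\int h^{2}\,d\rho}\cdot\sqrt{\int h^{4}\,d\rho}=\sqrt{\mathbb{E}[F^{4}]-3}$.

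The main technical obstacle is tracking the sharp constant $c_{1}=\frac{1}{\sqrt{2\pi}}+\frac{2}{3}$. This calls for the two-term decomposition of the Stein expression used in \cite{DVZ}: the contribution controlled by $\|f'\|_{\infty}\le\sqrt{2/\pi}$ produces the $\frac{1}{\sqrt{2\pi}}$ factor (after absorbing a $\tfrac12$ from the Taylor remainder integral $\int_{0}^{1}s\,ds$, since $\sqrt{2/\pi}\cdot\tfrac12=\frac{1}{\sqrt{2\pi}}$), while the contribution controlled by the Lipschitz constant of $f'$ yields the $\frac{2}{3}$ factor. The precise bookkeeping is delicate and is carried out in the general Poisson Stein framework of Peccati--Sol\'e--Taqqu--Utzet, specialised to the first chaos in \cite{DVZ}.

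For the Kolmogorov bound, the same Mecke-based scheme applies, but the Stein solution $f_{z}$ associated with the indicator test function $\eta=\mathbf{1}_{(-\infty,z]}$ is not smooth: one only has $\|f_{z}\|_{\infty}\le 1$ and $\|f_{z}'\|_{\infty}\le 1$, and the Taylor remainder step above breaks down. The standard fix is to split the Stein expression into a contribution from the event $\{|F-z|\le\varepsilon\}$, bounded crudely by $\|f_{z}\|_{\infty}\le 1$, and a contribution from its complement, where one uses smoothness of $f_{z}$ away from $z$. Optimizing in $\varepsilon$ through Chebyshev-type concentration inequalities controlled by $\mathbb{E}[F^{2}]=1$ and $\mathbb{E}[F^{4}]$ generates the polynomial factors $(\mathbb{E}[F^{4}])^{1/2}$ and $(\mathbb{E}[F^{4}])^{1/4}$, while the universal constant $11$ absorbs the remaining numerical pieces as in \cite{DP}.
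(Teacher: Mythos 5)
First, a point of context: the paper never proves this statement — Theorem \ref{FMT-DP} is recalled in the appendix purely as a background result quoted from \cite{DVZ} and \cite{DP} — so there is no internal proof to compare against, and your attempt must stand on its own. Your Wasserstein half essentially does: the cumulant identity $\kappa_n(I_1(h))=\int h^n\,d\rho$, the Mecke/add-one-cost formula $\mathbb{E}[Ff(F)]=\int h(z)\mathbb{E}[f(F+h(z))-f(F)]\,\rho(dz)$, and the Taylor step are all correct, and in the first chaos the computation is even cleaner than you suggest: since $\int h^2\,d\rho=1$, the $f'(F)$ contributions cancel exactly, leaving only the remainder bounded by $\tfrac{1}{2}\|f''\|_{\infty}\int|h|^{3}\,d\rho\le\int|h|^{3}\,d\rho\le\sqrt{\mathbb{E}[F^{4}]-3}$, i.e. constant $1<c_{1}$, so the stated bound follows a fortiori. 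Your account of where $c_{1}=\tfrac{1}{\sqrt{2\pi}}+\tfrac{2}{3}$ comes from is, however, misattributed: in the first chaos there is no $\|f'\|_{\infty}$-controlled term at all (the general-chaos quantity $\mathbb{E}\left|1-\langle DF,-DL^{-1}F\rangle\right|$ which produces it vanishes identically here), so the ``delicate bookkeeping'' you defer to \cite{DVZ} is unnecessary — your own computation closes the Wasserstein bound once you simply track the factor $\tfrac{1}{2}\|f''\|_{\infty}\le 1$.

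The genuine gap is the Kolmogorov bound. There you only sketch the generic $\varepsilon$-splitting strategy for the non-smooth Stein solution and then assert that optimization ``generates'' the factors $(\mathbb{E}[F^{4}])^{1/2}$ and $(\mathbb{E}[F^{4}])^{1/4}$ while ``the universal constant $11$ absorbs the remaining numerical pieces as in \cite{DP}.'' That is a citation of the result, not a derivation: none of the estimates that actually produce those powers (e.g. controlling terms of the type $\mathbb{E}\bigl[\mathbf{1}_{\{x<F\le x+|h(z)|\}}h(z)^{2}\bigr]$ by Cauchy--Schwarz/Markov, which is where the extra moments of $F$ enter), nor the choice of $\varepsilon$, nor the numerical constant are carried out, so as a standalone argument this half is incomplete. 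Since the paper itself simply quotes \cite[Theorem 1.3]{DP} for this inequality, relying on that reference is consistent with the paper's treatment, but you should present it explicitly as an appeal to the literature rather than as a proof.
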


Before stating the next result, we need some additional notation. For any $%
\ell \in \mathbb{N}$, fixed an integer $d\geq 2$, we consider the centred
random vector $F=(F_{1},\dots ,F_{d})^{T}$ where $F_{j}\in W_{1}$, for $%
1\leq j\leq d$. For $j=1,\ldots ,d$. We denote by $\Gamma _{d}$ the
covariance matrix of $F$, i.e. $\Gamma _{d;ij}=\mathbb{E}[F_{i}F_{j}]$ for $%
i,j=1,\dots ,d$.\newline
For a $k-$multilinear form $\psi :(\mathbb{R}^{d})^{k}\rightarrow \mathbb{R}$%
, $k\in \mathbb{N}$, we define the \textit{operator norm}
\begin{equation*}
||\psi ||_{op}:=\sup \{|\psi (u_{1},\dots ,u_{k})|:u_{j}\in \mathbb{R}%
^{d},||u_{j}||_{2}=1,j=1,\dots ,k\}.
\end{equation*}%
Furthermore, note that $D^{k-1}g(x)$, the $(k-1)$--derivative of $g$ at the
point $x$, can be read as a multilinear form. In this setting, we can define
the generalization of the minimum Lipschitz constant for any derivative of
order $k-1$ as follows: fixed $k\geq 1$ and chosen $g\in C^{k-1}\left(
\mathbb{R}^{d}\right) $, take%
\begin{equation*}
M_{k}(g):=\sup_{x\neq y}\frac{||D^{k-1}g(x)-D^{k-1}g(y)||_{op}}{%
	||x-y||_{\mathbb{R}^{d} }},
\end{equation*}%
see again \cite{DVZ}.\newline

\begin{theorem}
	\label{FMT-DVZ}[Quantitative Fourth Moment Theorem (multidimensional case),
	\cite[Theorem 1.7, Corollary 1.8 and Remark 4.3]{DVZ}] Under the above
	notation, let $Z_d$ be a centred Gaussian random vector of dimension $d$
	with covariance matrix $\Gamma_d$. Then, for every $g\in C^{3}(\mathbb{R}%
	^{d})$, we have that
	\begin{equation*}
	|\mathbb{E}[g(F)]-\mathbb{E}[g(Z_d)]|\leq B_{3}(g;d)\sum_{i=1}^{d}\sqrt{%
		\mathbb{E}[F_{i}^{4}]-3\mathbb{E}[F_{i}^{2}]^{2}}
	\end{equation*}%
	where%
	\begin{equation}  \label{B3}
	B_{3}(g;d)=A_{2}(g;d)+\frac{2\sqrt{d \text{Tr}(\Gamma_d)}}{9} M_{3}(g)\text{ , }%
	A_{2}(g;d)=\frac{\sqrt{2d}}{4}M_{2}(g)\text{ }.
	\end{equation}
	If in addition $\Gamma_d$ is positive definite, then for every $g\in C^{2}(
	\mathbb{R}^{d})$, it holds that
	\begin{equation*}
	|\mathbb{E}[g(F)]-\mathbb{E}[g(Z)]| \leq B_2(g;d) \sum_{i=1}^{d} \sqrt{%
		\mathbb{\ E}[F_i^4]-3\mathbb{E}[F_i^2]^2}
	\end{equation*}
	with
	\begin{equation*}
	B_{2}(g;d)=A_{1}(g;d)+\frac{\sqrt{2\pi}||\Gamma_d^{-\frac{1}{2}}||_{op}
		\text{Tr}(\Gamma_d )}{6} M_{2}(g)\text{ , } A_{1}(g;d)=\frac{||\Gamma_d^{-\frac{1}{2%
		}}||_{op}}{\sqrt{\pi}}M_{1}(g)\text{ }.
	\end{equation*}
\end{theorem}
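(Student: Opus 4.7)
My plan is to combine Stein's method for multivariate Gaussian approximation with the Mecke integration-by-parts formula on the Poisson space --- the standard Malliavin--Stein strategy in this context. For a test function $g \in C^{3}(\mathbb{R}^d)$, I would first construct the Stein solution $f_g$ of
\begin{equation*}
\text{Tr}\bigl(\Gamma_d D^2 f_g(x)\bigr) - \langle x, \nabla f_g(x) \rangle = g(x) - \mathbb{E}[g(Z_d)]
\end{equation*}
via the Ornstein--Uhlenbeck representation $f_g = -\int_0^{\infty}(P_t g - \mathbb{E}[g(Z_d)])\,dt$ with $P_t g(x) = \mathbb{E}[g(e^{-t}x + \sqrt{1 - e^{-2t}}\,Z_d)]$. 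Differentiating under the integral sign and using the contractivity of $P_t$ on derivatives yields the Stein-factor bounds $M_2(f_g) \leq \tfrac{1}{2} M_2(g)$ and $M_3(f_g) \leq \tfrac{1}{3} M_3(g)$; once combined with the Taylor and Cauchy--Schwarz bookkeeping below, these produce the coefficients $\tfrac{\sqrt{2d}}{4}$ and $\tfrac{2\sqrt{d\,\text{Tr}(\Gamma_d)}}{9}$ appearing in $B_3(g;d)$.

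The second ingredient is the Mecke formula: since each component $F_i = I_1(h_i)$ belongs to the first Poisson chaos, for any sufficiently regular functional $\Phi$ one has
\begin{equation*}
\mathbb{E}[F_i \Phi(F)] = \int_{\Theta} h_i(z)\,\mathbb{E}[D_z \Phi(F)]\,\rho(dz), \qquad D_z \Phi(F) = \Phi(F + h(z)) - \Phi(F),
\end{equation*}
with $h(z) = (h_1(z), \ldots, h_d(z))^T$ so that $D_z F = h(z)$. The crucial feature is the absence of a chain rule for $D_z$: I would Taylor-expand
\begin{equation*}
D_z[\partial_i f_g(F)] = \sum_{j} \partial^2_{ij} f_g(F)\,h_j(z) + R_i(z).
\end{equation*}
Inserting this expansion into $\mathbb{E}[\langle F, \nabla f_g(F) \rangle]$ and using $\int h_i h_j\,d\rho = \Gamma_{ij}$, the linear term produces $\sum_{ij} \Gamma_{ij}\,\mathbb{E}[\partial^2_{ij} f_g(F)]$, which cancels \emph{exactly} the Hessian trace on the other side of Stein's equation. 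Only the remainder survives:
\begin{equation*}
\mathbb{E}[g(F)] - \mathbb{E}[g(Z_d)] = -\sum_{i} \int_{\Theta} h_i(z)\,\mathbb{E}[R_i(z)]\,\rho(dz).
\end{equation*}

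The heart of the argument is the estimate on $R(z) = (R_i(z))_i$. Writing it as $\int_0^1[D^2 f_g(F + t h(z)) - D^2 f_g(F)]\,h(z)\,dt$ and using the operator-norm Lipschitz bound $\|D^2 f_g(x) - D^2 f_g(y)\|_{op} \leq M_3(f_g)\,\|x - y\|_2$, I obtain $\|R(z)\|_2 \leq \tfrac{1}{2}\,M_3(f_g)\,\|h(z)\|_2^2$. Cauchy--Schwarz applied to $\langle h(z), R(z)\rangle$ in $L^2(\rho)$, combined with the pointwise inequality $(\sum_j h_j^2)^2 \leq d \sum_j h_j^4$ and the elementary $\sqrt{\sum c_j^2} \leq \sum c_j$, yields a bound of order $\sqrt{d\,\text{Tr}(\Gamma_d)}\,\sum_j \|h_j\|_{L^4(\rho)}^2$. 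The explicit first-chaos identity $\mathbb{E}[F_j^4] - 3\,\mathbb{E}[F_j^2]^2 = \|h_j\|_{L^4(\rho)}^4$ then converts this into $\sqrt{d\,\text{Tr}(\Gamma_d)}\,\sum_j \sqrt{\mathbb{E}[F_j^4] - 3\,\mathbb{E}[F_j^2]^2}$, producing the $M_3(g)$ piece of $B_3$; a companion estimate bounding a residual Taylor contribution through the Lipschitz constant $M_2(f_g)$ of $\nabla f_g$ supplies the $A_2(g;d) = \tfrac{\sqrt{2d}}{4}\,M_2(g)$ term. For the second statement, when $\Gamma_d$ is positive definite, I would replace the Stein solution by the one obtained through Gaussian integration by parts directly on $P_t g$: each spatial derivative is traded for a factor of $\Gamma_d^{-1/2}/\sqrt{1 - e^{-2t}}$, and integrating in $t$ yields the stronger bounds $M_1(f_g) \leq \|\Gamma_d^{-1/2}\|_{op}\,M_1(g)/\sqrt{\pi}$ and $M_2(f_g) \leq \tfrac{\sqrt{2\pi}}{6}\,\|\Gamma_d^{-1/2}\|_{op}\,\text{Tr}(\Gamma_d)\,M_2(g)$; the same Mecke--Taylor machinery, now requiring only a first-order expansion since $f_g \in C^2$ suffices, delivers the $B_2$ bound. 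The main obstacle is the precise bookkeeping of constants through each Taylor step and Cauchy--Schwarz application in order to hit the stated prefactors exactly; since the variables live in the first chaos (no product formulae or contractions $h_i \otimes_r h_j$ with $r \geq 1$ enter), the qualitative structure of the argument is simpler than the generic fourth-moment theorem on the Poisson space, but the numerical constants must be tracked by hand.
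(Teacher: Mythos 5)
This statement is not proved in the paper at all: it is background material quoted, in adapted form, from D\"obler--Vidotto--Zheng \cite{DVZ} (their Theorem 1.7, Corollary 1.8 and Remark 4.3), so your attempt can only be compared with the proof in that reference. Your route is genuinely different and, for the first bound, it works. DVZ treat vectors of multiple integrals of \emph{arbitrary} chaos orders via exchangeable pairs built from the Mehler/Ornstein--Uhlenbeck semigroup, which is why both an $M_2$ and an $M_3$ term appear in $B_3$; you instead exploit the first-chaos structure directly, in the spirit of Peccati--Zheng: Mecke gives exactly your identity with $D_zF=h(z)$, the term $\sum_{i,j}\Gamma_{ij}\mathbb{E}[\partial^2_{ij}f_g(F)]$ cancels exactly because $\int_\Theta h_ih_j\,d\rho=\Gamma_{ij}$ (for components in $W_1$ the ``covariance distortion'' term vanishes identically), and only the cubic remainder survives. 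Tracking your own estimates gives $|\mathbb{E}[g(F)]-\mathbb{E}[g(Z_d)]|\le\tfrac16 M_3(g)\int_\Theta\|h(z)\|_2^3\,\rho(dz)\le\tfrac16\sqrt{d\,\mathrm{Tr}(\Gamma_d)}\,M_3(g)\sum_j\sqrt{\mathbb{E}[F_j^4]-3\mathbb{E}[F_j^2]^2}$, using $\mathrm{cum}_4(I_1(h_j))=\int h_j^4\,d\rho\ge 0$; since $1/6<2/9$ and the $A_2$ term is nonnegative, this \emph{implies} the stated inequality. In particular you should not try to ``hit'' $A_2(g;d)=\frac{\sqrt{2d}}{4}M_2(g)$: it is an artifact of the general-chaos proof, and the ``companion estimate'' you invoke to produce it has no counterpart in the first-chaos computation. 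Only minor technicalities remain: integrability for Mecke ($h_i\in L^1(\rho)\cap L^2(\rho)$, or approximate), well-definedness of the semigroup representation of $f_g$ when $g$ is not Lipschitz, and the remark that $M_k(f_g)\le M_k(g)/k$ needs no invertibility of $\Gamma_d$.

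The genuine gap is in the second ($d_2$) part. Your claim that ``only a first-order expansion'' is needed is wrong: stopping at first order, the Hessian term does not cancel and you are left with an error of size $\mathrm{Tr}(\Gamma_d)\,\sup_x\|D^2f_g(x)\|_{op}$, which does not vanish. You must still expand to second order, and since $g\in C^2$ only, the Lipschitz control of $D^2f_g$ has to be manufactured by trading exactly one derivative through Gaussian integration by parts in the Mehler representation, yielding a bound of the form $\|D^2f_g(x)-D^2f_g(y)\|_{op}\le c\,\|\Gamma_d^{-1/2}\|_{op}\,M_2(g)\,\|x-y\|_2$; this is where $\|\Gamma_d^{-1/2}\|_{op}$ and the $\sqrt{2\pi}$ enter, while $\mathrm{Tr}(\Gamma_d)$ appears only afterwards, through $\int_\Theta\|h(z)\|_2^2\,\rho(dz)=\mathrm{Tr}(\Gamma_d)$ in the remainder estimate. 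Consequently your displayed Stein factors are misstated: ``$M_2(f_g)\le\frac{\sqrt{2\pi}}{6}\|\Gamma_d^{-1/2}\|_{op}\mathrm{Tr}(\Gamma_d)M_2(g)$'' cannot be a property of $f_g$ alone, and $\|\Gamma_d^{-1/2}\|_{op}M_1(g)/\sqrt{\pi}$ is (up to constants) a bound on the second, not the first, derivative of $f_g$. With the corrected Stein factors the same second-order Mecke--Taylor argument goes through and yields a bound of the stated type; as written, however, the second half of your plan does not amount to a proof.
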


Let $K$ be a separable Hilbert space and $X$ a $K-$valued random variable in $L^2(\rho)$. We recall that if $X \in L^2(\rho)$, with $\mathbb{E}\left[\left \vert \left \vert X \right \vert\right\vert_K^2 \right]<\infty$, the covariance operator $S: K \to K$ of $X$ is defined by
$$Su=\mathbb{E}[\langle X,u \rangle_{K} X].$$
$S$ is a positive, self-adjoint trace-class operator that verifies the identity $$\text{Tr} S= \mathbb{E}[||X||_{K }^2].$$
We consider the Banach space of all trace-class operators on $K$, equipped with norm $\text{Tr}|A|$, where $|A|=\sqrt{A^*A}$ and $A^*$ denotes the adjoint of $A$. The subspace of Hilbert-Schmidt operators on $K$ is denoted by HS($K$), associated to the norm $||A||_{\text{HS}(K)} = \sqrt{\text{Tr}(AA^*)}$, $A \in \text{HS}(K)$.\\

Now, assume that $X$ is a $K-$valued random variable which belongs to the first
Wiener chaos with finite fourth moment, i.e. $\mathbb{E}[||X||_{K}^4]<\infty$,
and with covariance operator $S$.
%Recall that for two matrices $%
%A,B\in \mathbb{R}^{d\times d}$, their Hilbert-Schmidt inner product is
%defined by
%\begin{equation*}
%\langle A,B\rangle _{HS}=Tr(AB^{T})\text{ , }||A||_{HS}=\sqrt{\langle
%A,A\rangle _{HS}}\text{ .}
%\end{equation*}%
We denote by $Z$ a Gaussian process taking values in the same separable
Hilbert space as $X$ and having the same covariance operator $S$. The following result holds.

\begin{theorem}
	\label{FMT-BCD}[Functional Quantitative Fourth Moment Theorem, \cite[Theorem
	3 and Corollary 1]{BCD}] Under the above notation and assumptions, it holds
	that
	\begin{equation*}
	d_{3}(X,Z)\leq \left( \frac{1}{4}+\frac{1}{2}\sqrt{ \mathbb{E}[||X||_{K}^2]}%
	\right) \sqrt{\mathbb{E}[||X||_{K}^{4}]-\mathbb{E}%
		[||X||_{K}^{2}]^{2}-2||S||_{HS(K)}^{2}}.
	\end{equation*}
\end{theorem}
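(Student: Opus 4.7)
The plan is to apply the Stein--Malliavin method for Poisson functionals in the Hilbert-space-valued formulation developed in \cite{BCD}. Fix a test function $h \in \mathcal{I}$ (so $\|h''\|_\infty, \|h'''\|_\infty \leq 1$) and write $H := L^2(\rho)$. The first step is to set up the Stein equation associated with the Gaussian target $Z$ on $H$ with covariance operator $S$: using the Ornstein--Uhlenbeck semigroup on the Gaussian measure $\mathcal{N}(0,S)$, define
$$\phi(x) := -\int_0^\infty \bigl(\mathbb{E}[h(e^{-t}x + \sqrt{1-e^{-2t}}\,Z)] - \mathbb{E}[h(Z)]\bigr)\,dt,$$
so that $\phi$ solves the Hilbert-space Stein identity $\text{Tr}(S\phi''(x)) - \langle x, \phi'(x)\rangle_H = h(x) - \mathbb{E}[h(Z)]$. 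Standard semigroup estimates yield regularity bounds $\|\phi''\|_{op}\leq\tfrac{1}{2}\|h''\|_\infty$ and $\|\phi'''\|_{op}\leq\tfrac{1}{3}\|h'''\|_\infty$; these are the source of the absolute constants $\tfrac{1}{4}$ and $\sqrt{4\mathbb{E}[\|X\|^2_H]}$ in the final bound.

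Next I would apply the Poisson integration-by-parts (Mecke) formula to rewrite $\mathbb{E}[\langle X, \phi'(X)\rangle_H]$. Since $X = I_1(f) \in W_1$, the add-one cost operator acts deterministically as $D_z X = f(z)$, yielding
$$\mathbb{E}[\langle X, \phi'(X)\rangle_H] = \mathbb{E}\Big[\int_\Theta \langle f(z), \phi'(X+f(z))-\phi'(X)\rangle_H\,\rho(dz)\Big].$$
A first-order Taylor expansion of $\phi'(X+f(z))-\phi'(X)$ produces the term $\mathbb{E}[\text{Tr}(S\phi''(X))]$, using the identity $\int_\Theta f(z)\otimes f(z)\,\rho(dz)=S$, and this exactly cancels the other summand of the Stein identity. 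Hence $|\mathbb{E}[h(X)]-\mathbb{E}[h(Z)]|$ is reduced to the expectation of a quadratic Taylor remainder involving $\phi''$ evaluated at a pair of intermediate points $X$ and $X+f(z)$.

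The third step is to dominate this remainder by $C\sqrt{\Delta}$, where
$$\Delta := \mathbb{E}[\|X\|^4_H] - \mathbb{E}[\|X\|^2_H]^2 - 2\|S\|^2_{HS}.$$
The key algebraic fact is that for $X=I_1(f)$ in the first Poisson chaos, the Poisson multiplication formula expresses $\|X\|^2_H - \mathbb{E}[\|X\|^2_H]$ as a sum of first- and second-chaos terms, and $\Delta$ equals the sum of the squared $L^2$-norms of their kernels. In particular $\Delta \geq 0$, with equality if and only if $X$ is Gaussian. Splitting the Taylor remainder by Cauchy--Schwarz, the quadratic $\phi''$-piece contributes the constant $\tfrac{1}{4}$, while the cubic $\phi'''$-piece is controlled by $\sqrt{\mathbb{E}[\|X\|^2_H]}$ times $\sqrt{\Delta}$, producing the factor $\sqrt{4\mathbb{E}[\|X\|^2_H]}$. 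Passing to the supremum over $h \in \mathcal{I}$ then yields the stated bound on $d_3(X,Z)$.

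The hardest part will be this last step: because the Poisson add-one cost operator is a genuine difference rather than a derivative, the Taylor remainders must be estimated without a second integration by parts. The trick is to rewrite $\int_\Theta \|f(z)\|^2_H\,\rho(dz) = \mathbb{E}[\|X\|^2_H]$ on one factor of the Cauchy--Schwarz split, and to identify the subtler cross-integrals $\int\!\int \langle f(y),f(z)\rangle_H^2\,\rho(dy)\rho(dz)$ that remain after squaring with an explicit portion of $\Delta$ (after subtraction of the Gaussian variance $2\|S\|^2_{HS}$). This mirrors the scalar Peccati--Zheng / Last--Peccati--Schulte fourth-moment inequality already invoked in Theorem \ref{FMT-DP}, now lifted to a Hilbert-valued target.
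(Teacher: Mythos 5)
The paper itself offers no proof of this statement: Theorem \ref{FMT-BCD} is recalled verbatim from \cite{BCD} (Theorem 3 and Corollary 1) as background material and is then merely applied in the functional-convergence section, so there is no internal argument to compare yours against. Your outline does belong to the right family of techniques (a Hilbert-space Stein equation solved via the Ornstein--Uhlenbeck semigroup, the Mecke/add-one-cost formula, Taylor expansion, Cauchy--Schwarz), which is the same circle of ideas as \cite{BC,BCD}, although those papers work through Dirichlet structures and the carr\'e du champ rather than the explicit first-chaos computation you propose. Your identification of the fourth-moment quantity should, however, be sharpened: for $X=I_1(f)$ with $f:\Theta\to H$ the product formula gives
\begin{equation*}
\mathbb{E}\bigl[\Vert X\Vert_H^4\bigr]=\int_\Theta \Vert f(z)\Vert_H^4\,\rho(dz)+\bigl(\mathbb{E}\bigl[\Vert X\Vert_H^2\bigr]\bigr)^2+2\Vert S\Vert_{HS}^2\,,
\end{equation*}
so that $\Delta=\int_\Theta\Vert f(z)\Vert_H^4\,\rho(dz)$: after subtracting $2\Vert S\Vert_{HS}^2$ only the first-chaos kernel of $\Vert X\Vert_H^2$ survives, not ``the sum of the squared norms of their kernels''; moreover $\Delta=0$ forces $f=0$ $\rho$-a.e., so your claim ``equality iff $X$ is Gaussian'' is incorrect on the Poisson space (a non-zero first-chaos Poisson integral is never Gaussian).

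The genuine gaps are two. First, the constants are asserted rather than derived: if you carry your own scheme to completion, the trace term cancels exactly (since $\int_\Theta f(z)\otimes f(z)\,\rho(dz)=S$ is deterministic), and the entire error is the second-order Taylor remainder, bounded by $\tfrac12 M_3(\phi)\int_\Theta\Vert f(z)\Vert_H^3\,\rho(dz)\le \tfrac16\sqrt{\mathbb{E}[\Vert X\Vert_H^2]}\,\sqrt{\Delta}$; nothing in your argument produces an additive $\tfrac14\sqrt{\Delta}$ contribution, so attributing the $\tfrac14$ to ``the quadratic $\phi''$-piece'' does not hold up -- in \cite{BCD} that term comes from controlling the fluctuations of the carr\'e du champ in their general (not chaos-specific) argument. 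Second, the regularity theory for the Stein solution in infinite dimensions -- Fr\'echet differentiability of $\phi$, the bounds $M_2(\phi)\le\tfrac12 M_2(h)$ and $M_3(\phi)\le\tfrac13 M_3(h)$, and the well-definedness of $\mathrm{Tr}(S\phi'')$ for a trace-class covariance $S$ -- is precisely the technical heart of \cite{BC,BCD} and cannot be dismissed as ``standard semigroup estimates'': without it the Stein identity you start from is not available. In short, your sketch is a plausible reconstruction of the strategy, but it is not yet a proof of the stated inequality with the stated constants.
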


\subsection{Wigner's and Clebsch-Gordan coefficients}

\label{C-G coefficients} In this section we review briefly some background
facts and notation about Wigner's 3j and Clebsch-Gordan coefficients; see
\cite{Varshalovich} and \cite{MP} for a much more detailed discussion, in
particular concerning the relationships with the quantum theory of angular
momentum and group representation properties of $SO(3)$.

We start recalling the following analytic expression for the Wigner's 3j
coefficients (valid for $m_{1}+m_{2}+m_{3}=0$, see \cite{Varshalovich}, eq.
(8.2.1.5))%
\begin{equation*}
\begin{pmatrix}
\ell _{1} & \ell _{2} & \ell _{3} \\
m_{1} & m_{2} & m_{3}%
\end{pmatrix}%
:=(-1)^{\ell _{1}+m_{1}}\sqrt{2\ell _{3}+1}\left[ \frac{(\ell _{1}+\ell
	_{2}-\ell _{3})!(\ell _{1}-\ell _{2}+\ell _{3})!(\ell _{1}-\ell _{2}+\ell
	_{3})!}{(\ell _{1}+\ell _{2}+\ell _{3}+1)!}\right] ^{1/2}
\end{equation*}%
\begin{equation*}
\times \left[ \frac{(\ell _{3}+m_{3})!(\ell _{3}-m_{3})!}{(\ell
	_{1}+m_{1})!(\ell _{1}-m_{1})!(\ell _{2}+m_{2})!(\ell _{2}-m_{2})!}\right]
^{1/2}
\end{equation*}%
\begin{equation*}
\times \sum_{z}\frac{(-1)^{z}(\ell _{2}+\ell _{3}+m_{1}-z)!(\ell
	_{1}-m_{1}+z)!}{z!(\ell _{2}+\ell _{3}-\ell _{1}-z)!(\ell
	_{3}+m_{3}-z)!(\ell _{1}-\ell _{2}-m_{3}+z)!},
\end{equation*}%
where the summation runs over all $z^{\prime }$s such that the factorials
are non-negative. This expression becomes simpler when $m_{1}=m_{2}=m_{3}=0,$
where we have%
\begin{equation*}
\begin{pmatrix}
\ell _{1} & \ell _{2} & \ell _{3} \\
0 & 0 & 0%
\end{pmatrix}%
=
\end{equation*}%
\begin{equation}
\begin{cases}
0, &  \\
\mbox{ }\mbox{ }\mbox{ }\mbox{ }\mbox{ }\mbox{ }\mbox{ }\mbox{ }\mbox{ }%
\mbox{ }\mbox{ }\mbox{ }\mbox{ }\mbox{ }\mbox{ }\mbox{ }\mbox{ }\mbox{ }%
\mbox{ }\mbox{ }\mbox{ }\mbox{ }\mbox{ }\mbox{ }\mbox{ }\mbox{ }\mbox{ }%
\mbox{ }\mbox{ }\mbox{ }\mbox{ }\mbox{ for }\ell _{1}+\ell _{2}+\ell _{3}%
\mbox{ odd} &  \\
\frac{(-1)^{\frac{\ell _{1}+\ell _{2}-\ell _{3}}{2}}[(\ell _{1}+\ell
	_{2}+\ell _{3})/2]!}{[(\ell _{1}+\ell _{2}-\ell _{3})/2]![(\ell _{1}-\ell
	_{2}+\ell _{3})/2]![(-\ell _{1}+\ell _{2}+\ell _{3})/2]!}\big\{\frac{(\ell
	_{1}+\ell _{2}-\ell _{3})!(\ell _{1}-\ell _{2}+\ell _{3})!(-\ell _{1}+\ell
	_{2}+\ell _{3})!}{(\ell _{1}+\ell _{2}+\ell _{3}+1)!}\big\}^{1/2}, &  \\
\mbox{ }\mbox{ }\mbox{ }\mbox{ }\mbox{ }\mbox{ }\mbox{ }\mbox{ }\mbox{ }%
\mbox{ }\mbox{ }\mbox{ }\mbox{ }\mbox{ }\mbox{ }\mbox{ }\mbox{ }\mbox{ }%
\mbox{ }\mbox{ }\mbox{ }\mbox{ }\mbox{ }\mbox{ }\mbox{ }\mbox{ }\mbox{ }%
\mbox{ }\mbox{ }\mbox{ }\mbox{ }\mbox{ for }\ell _{1}+\ell _{2}+\ell _{3}%
\mbox{ even.} &
\end{cases}
\label{expression}
\end{equation}
On the other hand the so-called Clebsch-Gordan coefficients, denoted by $%
\{C_{\ell _{1},m_{1};\ell _{2},m_{2}}^{\ell _{3},m_{3}}\}$, are defined by
the identities (see \cite{Varshalovich}, Chapter 8)%
\begin{equation}
\begin{pmatrix}
\ell _{1} & \ell _{2} & \ell _{3} \\
m_{1} & m_{2} & m_{3}%
\end{pmatrix}%
=(-1)^{\ell _{3}+m_{3}}\dfrac{1}{\sqrt{2\ell _{3}+1}}C_{\ell
	_{1},-m_{1};\ell _{2},-m_{2}}^{\ell _{3},m_{3}}  \label{3.67 libro Domenico}
\end{equation}%
\begin{equation}
C_{\ell _{1},m_{1};\ell _{2},m_{2}}^{\ell _{3},m_{3}}=(-1)^{\ell _{1}-\ell
	_{2}+m_{3}}\sqrt{2\ell _{3}+1}%
\begin{pmatrix}
\ell _{1} & \ell _{2} & \ell _{3} \\
m_{1} & m_{2} & -m_{3}%
\end{pmatrix}%
.  \label{3.68 libro Domenico}
\end{equation}
The following orthonormality properties hold and are exploited in this paper:%
\begin{equation}
\sum_{m_{1}m_{2}}C_{\ell _{1},m_{1};\ell _{2},m_{2}}^{\ell
	_{3},m_{3}}C_{\ell _{1},m_{1};\ell _{2},m_{2}}^{\ell _{3}^{\prime
	},m_{3}^{\prime }}=\delta _{\ell _{3}}^{\ell _{3}^{\prime }}\delta
_{m_{3}}^{m_{3}^{\prime }},  \label{unitary 1}
\end{equation}%
\begin{equation}
\sum_{\ell m}C_{\ell _{1},m_{1};\ell _{2},m_{2}}^{\ell m}C_{\ell
	_{1},m_{1}^{\prime };\ell _{2},m_{2}^{\prime }}^{\ell ,m}=\delta
_{m_{1}}^{m_{1}^{\prime }}\delta _{m_{2}}^{m_{2}^{\prime }}.
\label{unitary 2}
\end{equation}%
Note that the Clebsch-Gordan coefficients vanish unless the so-called
triangular conditions
\begin{equation*}
|\ell _{1}-\ell _{2}|\leq \ell _{3}\leq \ell _{1}+\ell _{2},\quad
\mbox{ and
	the equation }\quad m_{1}+m_{2}=m_{3}
\end{equation*}%
are satisfied (see \cite{Varshalovich}, Chapter 8). For some special values
of the arguments, namely if $\ell _{3}=0\mbox{ or }\ell _{2}=0$, one has
more explicit forms of these coefficients:
\begin{equation}
C_{\ell _{1},m_{1};\ell _{2},m_{2}}^{0,0}=(-1)^{\ell _{1}-m_{1}}\dfrac{%
	\delta _{\ell _{1}}^{\ell _{2}}\delta _{m_{1}}^{-m_{2}}}{\sqrt{2\ell _{1}+1}}%
.  \label{1 pag 248}
\end{equation}
From Remark 8.9 in \cite{MP} we also have the inequality
\begin{equation}
\{C_{\ell ,0;\ell ,0}^{L,0}\}^{2}\leq \frac{1}{(2L+1)}\text{ .}
\label{rem8.9MP}
\end{equation}
Now we recall the general formula (\cite{Varshalovich}, eqs. 5.6.2.12-13, or
\cite{MP} eqs 3.64 and 6.46) for the evaluation of multiple integrals of
spherical harmonics, the so-called \textit{Gaunt integrals}, given by%
\begin{eqnarray*}
	&&\int_{\mathbb{S}^{2}}Y_{\ell _{1}m_{1}}(x).....Y_{\ell _{n}m_{n}}(x)\,dx\\
	&&=\sqrt{\frac{4\pi }{2\ell _{n}+1}}\sum_{L_{1}...L_{n-3}}%
	\sum_{M_{1}...M_{n-3}}\left[ C_{\ell _{1},m_{1};\ell
		_{2},m_{2}}^{L_{1},M_{1}}C_{L_{1},M_{1};\ell
		_{3},m_{3}}^{L_{2},M_{2}}...C_{L_{n-3},M_{n-3};\ell _{n-1},m_{n-1}}^{\ell
		_{n},-m_{n}}\right. \\
	&&\times \left. \sqrt{\frac{\prod_{i=1}^{n-1}(2\ell _{i}+1)}{(4\pi )^{n-1}}}%
	\{C_{\ell _{1},0;\ell _{2},0}^{L_{1},0}C_{L_{1},0;\ell
		_{3},0}^{L_{2},0}...C_{L_{n-3},0;\ell _{n-1},0}^{\ell _{n},0}\}\right] .
\end{eqnarray*}
The most important case dealt with in this paper is given by%
\begin{eqnarray*}
	&&\int_{\mathbb{S}^{2}}Y_{\ell m_{1}}(x)Y_{\ell m_{2}}(x)Y_{\ell
		m_{3}}(x)Y_{\ell m_{4}}(x)\,dx\\
	&&= \frac{(2\ell+1)}{\sqrt{4\pi}} \sum_{L} (-1)^{L-M}\{C_{\ell ,0 ;\ell
		,0}^{L,0}\}^2 \frac{C_{\ell ,m_1; \ell, m_2}^{L,M} C_{\ell, m_3;\ell,
			m_4}^{L,-M}}{2L+1}.
\end{eqnarray*}

Similarly, as shown in \cite[Eq. (30)]{M e W 2011}, the following identity
hold%
\begin{eqnarray*}
	\int_{0}^{1}P_{l}(t)^{4}\,dt&=&\frac{1}{2\ell +1}\sum_{L=0}^{2\ell
	}\{C_{l,0;l,0}^{L,0}C_{L,0;l,0}^{L,0}\}^{2}\\
	&=&\sum_{L=0}^{2l}(2L+1)%
	\begin{pmatrix}
		l & l & L \\
		0 & 0 & 0%
	\end{pmatrix}%
	^{4}
\end{eqnarray*}
compare \cite[Eq. (8.9.4.20)]{Varshalovich}.

%%%%%%%%%%%%%%%%%%%%%%%%%%%%%%%%%%%%%%%%%%%%%%%%%%%%%%%%%%%%%%%%%%%
%%                                                               %%
%% You may add acknowledgments (optional).                       %%
%%                                                               %%
%%%%%%%%%%%%%%%%%%%%%%%%%%%%%%%%%%%%%%%%%%%%%%%%%%%%%%%%%%%%%%%%%%%

\end{document}